\newtheorem{thm}{Theorem}
\newtheorem{prop}[thm]{Proposition}
\newtheorem{lemma}[thm]{Lemma}
\newtheorem{definition}{{\bf Definition}}
\newtheorem{problem}{{\bf Problem}}
\newtheorem{remark}{\noindent {\bf Remark}}
\DeclareMathOperator*{\argmin}{arg\,min}
\newcommand{\Sinit}{\ensuremath{{\cal S}^\textit{init}}}
\newcommand{\Xinit}{\ensuremath{{\cal X}^\textit{init}}}
\newcommand{\enabled}{\ensuremath{\textit{Enabled}}}
\newcommand{\pr}{\ensuremath{\Pi}}
\newcommand{\ps}{\ensuremath{\textit{PS}}}
\newcommand{\g}{\ensuremath{\textit{G}}}
\newcommand{\dom}{\ensuremath{\textit{dom}}}
\renewcommand{\H}{\ensuremath{{\cal H}}}
\newcommand{\T}{\ensuremath{{\cal T}}}
\renewcommand{\int}{\ensuremath{\mathbb{Z}}}
\newcommand{\nnint}{\ensuremath{\mathbb{Z}_+}}
\renewcommand{\S}{\ensuremath{{\cal S}}}
\newcommand{\U}{\ensuremath{{\cal U}}}
\newcommand{\X}{\ensuremath{{\cal X}}}
\newcommand{\J}{\ensuremath{{\cal J}}}
\newcommand{\W}{\ensuremath{{\cal W}}}
\newcommand{\dss}{\ensuremath{{\cal D}}}
\newcommand{\grid}{\ensuremath{{\textit{Grid}}}}
\renewcommand{\P}{\ensuremath{{\cal P}}}
\renewcommand{\L}{\ensuremath{{\cal L}}}
\newcommand{\Ld}{\ensuremath{{\cal L}_d}}
\newcommand{\trace}{\ensuremath{\textit{Tr}}}
\newcommand{\norm}[1]{\ensuremath{\left|\!\left|#1\right|\!\right|}}
\newcommand{\pathf}{\ensuremath{\textit{Paths}}}
\newcommand{\pathm}{\ensuremath{\textit{Paths}^m}}
\newcommand{\str}{\ensuremath{\textit{Str}}}
\newcommand{\abs}{\ensuremath{\textit{Abs}}}
\newcommand{\nnreals}{\ensuremath{\mathbb{R}_+}}
\newcommand{\reals}{\ensuremath{\mathbb{R}}}
\renewcommand{\u}{\ensuremath{{\bf u}}}
\newcommand{\equivs}{\ensuremath{{\equiv}_S}}
\newcommand{\equivu}{\ensuremath{{\equiv}_U}}
\newcommand{\optcar}{\textsf{OptCAR}}
\newcommand{\C}{\ensuremath{{\cal C}}}
\newcommand{\B}{\ensuremath{{\cal B}}}
\newcommand{\extract}{\ensuremath{\textit{ExtractController}}}
\newcommand{\solve}{\ensuremath{\textit{SolveFiniteGame}}}
\newcommand{\consabs}{\ensuremath{\textit{ConsAbs}}}
\newcommand{\redreach}{\ensuremath{\textit{ReduceReach}}}
\newcommand{\cost}{\ensuremath{\textit{C}}}
\begin{document}
	
\title{Abstraction-Refinement Based Optimal Control with Regular Objectives\footnote{A version of this paper is submitted to the IEEE Transactions on Automatic Control. This version is updated with fixed typo found after the submission.}}
\author{
Yoke~Peng~Leong,~\IEEEmembership{Student Member,~IEEE,} 
Pavithra~Prabhakar,~\IEEEmembership{Member,~IEEE,}%
\thanks{Y. P. Leong is with the Control and Dynamical Systems, California Institute of Technology, Pasadena, CA 91125, USA {ypleong@caltech.edu}}%
\thanks{P. Prabhakar is with the Department of Computer Science, Kansas State University, Manhattan, KS 66506, USA {pprabhakar@ksu.edu}}}


\maketitle


\begin{abstract}
This paper presents an abstraction-refinement method to synthesize control inputs
for a discrete-time piecewise linear system. The controlled system behavior satisfies a finite-word
linear-time temporal objective while incurring minimal cost. 
An abstract finite state weighted transition system is constructed from finite
partitions of the state and input spaces by solving
optimization problems. A sequence of suboptimal controllers is obtained by
considering a sequence of uniformly refined partitions. 
The abstract system satisfies the condition that the cost of the optimal control on the abstract system
provides an upper bound on the cost of the optimal control for the
original system. Furthermore, each suboptimal controller gives trajectories that have the cost upper bounded by the cost of the optimal control on the corresponding abstract system. In fact, the costs achieved by the sequence of suboptimal controllers converge to the
optimal cost for the piecewise linear system. 
The tool \optcar ~implements the abstraction-refinement algorithm. 
Examples illustrate the feasibility of this approach to synthesize automatically
suboptimal controllers with improving optimal costs.
\end{abstract}

\begin{IEEEkeywords}
Optimal control, Hybrid systems, Formal methods, Abstraction-refinement.
\end{IEEEkeywords}

\section{Introduction}
Formal synthesis is a paradigm for designing controllers
automatically which are correct-by-construction, and thus, reduces the verification overhead. 
In this paradigm, a mathematical model of a system to be controlled and formal specifications of properties that are expected of the controlled system are given as inputs to compute a controller that ensures the controlled system satisfies the properties. For instance, given a model for the behavior of a robot, synthesize a plan that reaches a given part of the workspace while avoiding certain obstacles.

Since the work of \cite{church63} on automated synthesis, multiple directions are pursued including synthesizing
finite state systems with respect to temporal logic objectives
\cite{manna81,clarke81} and controlling discrete event systems
\cite{ramadge87}.
Early works in hybrid control systems focused on
identifying subclasses of systems for which controller synthesis is decidable including timed automata~\cite{asarin94}, rectangular hybrid
automata~\cite{henzinger99} and o-minimal
automata~\cite{bouyer10,vladimerou11}. 
However, these classes of systems have limited continuous and discrete dynamics, and the synthesis problem becomes undecidable for a relatively simple class of hybrid systems \cite{henzinger95}.

For systems with complex dynamics, \cite{raischieee98} introduced an abstraction based controller
synthesis. Given a system, an abstract model, often a finite state system, is
constructed such that a controller for the
abstract model can be refined into a controller for the given
system. The controller for the abstract model is constructed using results from automata theory, and it is then implemented in the given
system. This method has been successfully applied in controller synthesis of switched
systems~\cite{wong12,liu13}, and in robotic path
planning~\cite{belta07,gazit13}. 

Often, in addition to designing a correct controller,
an application may require optimality condition.
For instance, a robot should reach a desired state with minimum battery. In this paper, we investigate an abstraction-refinement approach to
synthesize optimal controller with regular
properties that allow for specifications such as reaching a target region
or traversing a sequence of regions. A regular property is specified as a (possibly) infinite set of finite traces interpreted
as the allowed behaviors of the system, and generated by a finite
state automaton.
The foundation of our abstraction-refinement procedure and its
correctness rely on defining appropriate preorders on the class of
hybrid systems which preserve the optimal cost. This paper shows that the preordering defined satisfies the fact that if a system $\H_2$ is higher up in the
ordering than a system $\H_1$, then the cost of the optimal controller
for $\H_1$ is at most that of $\H_2$.

In our approach, first, an ``abstraction'' --- a simplified finite state system
--- is constructed from partitions of the state and input spaces, and
the edges of the system are annotated with weights which
over-approximate the costs in the original system. 
Then, a two player game on the finite state system is solved to obtain a
controller for the abstraction, and subsequently, a controller for the
original dynamical system. This approach iteratively consider finer partitions of the state and input
spaces, corresponding to grids of size $C/2^i$ for some constant $C$
and $i = 0, 1, 2, \ldots$. In fact,
for discrete-time piecewise linear systems, if the cost function is continuous and the optimal
control for the original system is robust with respect to the initial states, the cost of
the sequence of controllers constructed converges to the optimal cost. 

\subsection{Related work and contributions}
The main contribution of this paper is the optimality guarantee on the controllers synthesized --- an important missing piece in most previous works that study optimal controller synthesis using formal approaches \cite{Seatzu20061188,AydinGol201578}.
A hierarchical optimal controller synthesis problem was studied \cite{Seatzu20061188}, yet, no formal guarantees on
the optimal cost are provided. Similarly, \cite{AydinGol201578} considered optimal control synthesis by combining linear temporal logic, potential functions and model predictive control without formal guarantees on the optimal cost. On the other hand, the sequence of controllers constructed by our approach converges to the optimal cost for discrete time piecewise linear systems. Furthermore, for each suboptimal controller, the resulting trajectories have cost no greater than the optimal cost of the corresponding abstract system. Hence, when computational resources is limited, the best suboptimal controller found is guaranteed to generate trajectories with known bounded costs. 

In addition, this technique is more general than classical finite horizon optimal control problems \cite{bemporad_ochybrid_2000,lincoln02} because the time horizon is not fixed a priori. Our approach focuses on finite horizon optimal control problems, but the input sequence length is not a priori fixed because the regular property consists of finite traces whereby the length is variable. Apart from that, our method allows for a larger class of cost functions in comparison to previous works \cite{girard2012controller,reissig2013abstraction,mazo2011symbolic}. These works \cite{girard2012controller,reissig2013abstraction,mazo2011symbolic} used abstraction-based methods to find an optimal time controller that gives the shortest path which satisfies certain reachability conditions. In contrast, our method encodes transition cost in the abstraction scheme and thus, allowing for picking a path that is ``shortest'' with respect to a more general class of optimality conditions. Lastly, the technique presented in this paper does not place any prior restriction on the structure of the controllers \cite{karaman08,wolff13}. In \cite{karaman08,wolff13}, trajectory based optimization is applied for
synthesizing optimal control for discrete-time non-linear systems,
however, it constrains the class of control strategies considered (to
either finite paths or lassos). 

The generality of our approach enables control engineers to synthesize controllers with more flexible structure and cost considerations. The method introduced in this paper applies to the general class of discrete-time hybrid systems. However, due to its' generality, the computation burden could be high because
the optimizations that compute the weights
depend on the cost function and the dynamics. 
A prototype tool \optcar \, that implements the abstraction refinement algorithm
is presented (It will be made available for download when the paper is published). It is used to synthesize a (finite) sequence of
controllers for a discrete-time linear system and a linear
piecewise system with reachability objective.

A preliminary version of this work appeared in \cite{OptCAR_ACC}. This extended version provides a more complete discussion of the technique including generalization of Theorem \ref{thm:main_converge} with complete proof and an extra example to show that the resulting controller is similar to the linear quadratic regular for a linear system.

\subsection{Paper Outline}
The rest of this paper is organized as follows: Section
\ref{sec:preliminaries} presents useful mathematical
notations and definitions. Section \ref{sec:wts} defines the weighted transition system and its relevant
concepts, and Section \ref{sec:pre} explains the preorders for
optimal control. The abstraction and refinement of a weighted
transition system is developed in Section
\ref{sec:ar}. Section \ref{sec:pwa} presents the problem formulation for optimal control of
piecewise linear systems, the refinement procedure ($\optcar$) and the
cost analysis. The value iteration scheme for computing an
optimal strategy for a finite transition system is described in Section \ref{sec:fts}. 
Section \ref{sec:example} presents the implementation of $\optcar$ using two examples. 
Lastly, Section \ref{sec:conclusion} summarizes the
paper and states future directions of this work. Some proofs are
provided in the Appendix at the end of this paper for ease of reading. 

\section{Notations} \label{sec:preliminaries} 

The sets of real numbers, non-negative real numbers, integers and
non-negative integers are represented as $\reals$, $\nnreals$, $\int$
and $\nnint$, respectively.
The set of integers $\{0, \ldots, k\}$ is written as $[k]$ and a
sequence $x_0, \ldots, x_k$, denoted as $\{x_i\}_{i \in [k]}$. 

If $M \in \mathbb{R}^{n \times m}$ is a matrix, $\norm{M}_\infty =
\max_i \sum_{j=1}^m |M_{ij}|$.
If $z  = (z_0, \ldots, z_{k}) \in \mathbb{R}^{k+1}$ is a vector,
$\norm{z}_{\infty} = \max_t ~ \{|z_t|\}_{t\in[k]}$. 

An $\epsilon$-ball around $x$ is defined as $\B_{\epsilon}(x) = \{x'\in
\mathbb{R}^n \mid $ $ \norm{x - x'} _{\infty}$ $ \leq \epsilon\}$. 
Let $S \subseteq \mathbb{R}^{k}$ be a $k$-dimensional subset. 
The function $\grid$ splits $S$ into rectangular sets with $\epsilon$ width. 
That is, $\grid(S, \epsilon) =$
	\begin{equation*}
\left\{ S' \left\arrowvert  \exists d_1, \ldots, d_k \in
        \int, S' = S \bigcap \prod_{i=1}^k (d_i\epsilon, (d_i + 1) \epsilon)\right.\right\}.
	\end{equation*}

Given a function, $f:\mathcal{A} \to \mathcal{B}$, for any $A
\subseteq \mathcal{A}$, $f(A) = \{f(a) | a \in A\}$. The
domain of a function $f$ is denoted as $\dom(f)$. Given an
equivalence relation $R \subseteq A \times A$ and an element 
$a \in A$, $[a]_R =  \{b \,|\, (a, b) \in R\}$ denotes the equivalence
class of $R$ containing $a$.


\section{Weighted Transition Systems} \label{sec:wts}
This section defines a semantic model for discrete time hybrid systems with
cost (i.e., weighted transition systems) and formalizes the optimal
control problem.
\begin{definition}
A weighted transition system is defined as $\T = (\S, \Sinit, \U, \P, \Delta, \L, \W)$, where:
	\begin{itemize}
		\item $\S$ is a set of states;
                  \item $\Sinit \subseteq \S$ is a set of initial states;
		\item $\U$ is a set of control inputs;
                  \item $\P$ is a set of propositions;
		\item $\Delta \subseteq \S \times \U \times \S$ is a
                  transition relation;
                  \item $\L: \S \to \P$ is a state labeling
                    function, and 
		\item $\W: \S \times \U \times \S \to \nnreals$ is the transition cost function.
	\end{itemize}
\end{definition}
Note that an equivalent definition for the proposition set and the labeling function would be to let $\P'$ be a set of propositions and define a labeling function that maps the states onto the power set of $\P'$. The proposition set $\P$ is related to $\P'$ whereby $\P = \{(p_0,\ldots,p_m)\mid p_i \in \P' ~\forall i\in [m]\}$. Current definition of proposition set is chosen for notational simplicity.
In the sequel, a weighted transition
system is referred as a transition system.
For any $s \in \S$, define the set $\enabled(s) = \{u \in \U \mid
\exists s' \in \S ~s.t. ~ (s,u,s') \in \Delta\}$ to represent all inputs that do not transitions the state $s$ out of the predefined set $\S$. 
A transition system is \emph{finite} if $\S$ and $\U$
are finite. A finite state \emph{automaton} (denoted $(\T, P_f)$) is a finite transition system $\T$
along with a proposition $P_f \in \P$ which represents the final
states.
For the rest of the section, fix the transition system $\T = (\S, \Sinit, \U, \P, \Delta,$ $
\L, \W)$. 

\paragraph{Paths and traces}
A \emph{path} of the transition system $\T$ is a sequence of
states and inputs, $\zeta = s_0 u_0 s_1 u_1 s_2 \ldots$,
where $s_0 \in \Sinit$, $s_i \in \S$, $u_i \in \U$, and $(s_i,u_i, s_{i+1})
\in \Delta$. 
The set of all finite paths of $\T$ is denoted $\pathf(\T)$. 
A trace of a transition system is the sequence of state labels of a
path.
The trace of $\zeta$, denoted $\trace(\zeta)$, is the sequence $\L(s_0)
\L(s_1) \ldots$.

\paragraph{Properties}
This paper focuses on linear time properties over finite
behaviors of systems.
A \emph{property} $\pr$ over a set of propositions $\P$ is a set of finite
sequences $\pi = p_0 p_1 \ldots p_k$, where each $p_i \in \P$.
A property describes the desired behaviors of the system.

A property is \emph{regular} if it consists of the traces of a finite
state automaton $(\T, P_f)$, that is, it is the set of all traces of
paths of $\T$ which start in an initial state and end in a state
labelled by $P_f$.
This paper considers regular property that is
specified by a finite state automaton $(\T, P_f)$. Figure \ref{fig:proposition_example} shows an illustration.
The properties expressed by popular logics such
as finite words linear-time temporal logic (LTL) are regular, but their
translation into the finite transition system representation can lead
to an exponential blow up in the number of states with respect to the
size of the formula~\cite{vardi86}. 

\begin{figure}[b!]
      \centering
      \subfloat[Automaton representing a regular property $\Pi$ of a finite behavior.]{\includegraphics[trim = 0mm 0mm 0mm 0mm, clip, width=0.5\linewidth]{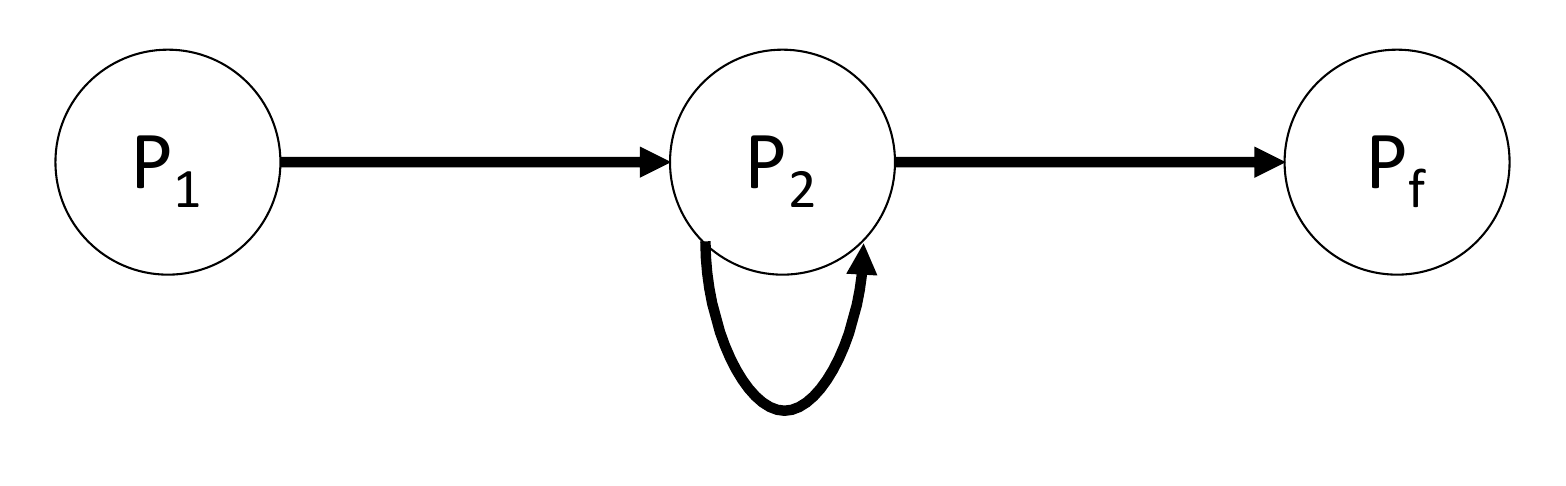}\label{fig:proposition_example_automaton}} 
	  \subfloat[Example paths given by a winning strategy $\sigma$ with respect to $\Pi$.]{\includegraphics[trim = 0mm 0mm 0mm 0mm, clip, width=0.5\linewidth]{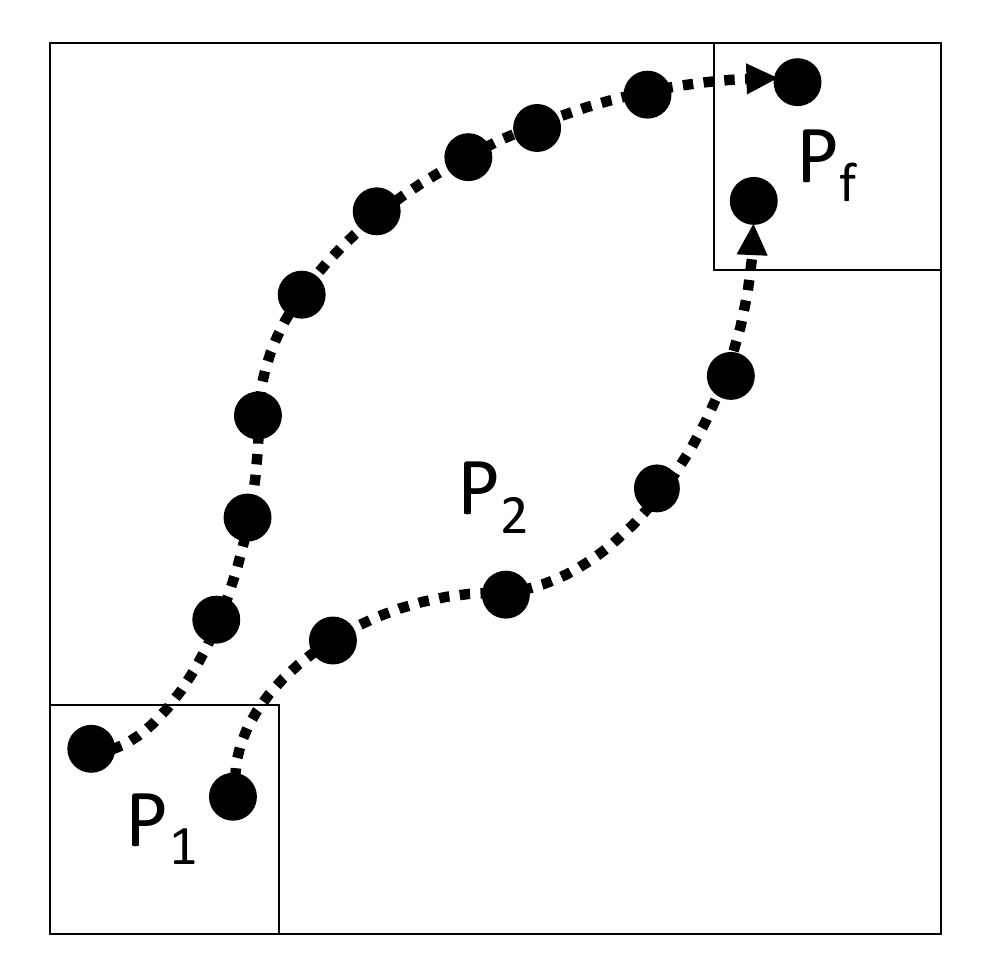}\label{fig:proposition_example_dynamics}}
	  \caption{An example of propositions and corresponding paths given by a winning strategy. This property is more general than a typical finite horizon control problem because the length of the sequence/path is not set a priori.}\label{fig:proposition_example}
\end{figure}

Here, we consider regular properties that specify properties about
finite behaviors as opposed to $\omega$-regular properties that
specify properties about infinite behaviors.
While our framework extends in a natural fashion to $\omega$-regular
properties, the results related to the convergence discussed in
Section \ref{sec:pwa} do not extend to the $\omega$-regular case.
In a separate future work, we intend to explore additional constraints
for the synthesis problem with $\omega$-regular specifications that
will ensure convergence of the control cost to the optimal value. 
Note that regular specifications already capture properties that are
more general than finite horizon control, since a regular property can characterize behaviors involving unbounded length. For instance,
consider reaching a target region without a priori bound on the
number of steps required to reach the region.

\paragraph{Strategies}
A strategy specifies the control inputs to a transition system at
different time points.
More specifically, a \emph{strategy} $\sigma$ for the transition system
$\T$ is a partial function $\sigma: \pathf(\T)
\to \U$ such that for a path $\zeta = s_0 u_0 s_1 \ldots u_{i-1} s_i$,
$\sigma(\zeta) \in \enabled(s_i)$.
A path $\zeta =  s_0 u_0 s_1 u_1 s_2 \ldots$ is said to
\emph{conform} to a strategy $\sigma$, if for all $i$, $\sigma(s_0u_0
\ldots s_i) = u_i$.  

A finite path $\zeta = s_0 u_0 \ldots s_k$ maximally conforms to a strategy
$\sigma$, if $\zeta$ conforms to $\sigma$ and there is no extension
$\zeta' = s_0 u_0 \ldots s_k u_k s_{k+1}$ of $\zeta$ which conforms to
$\sigma$. 
Let $\pathm_\sigma(\T, s_0)$ denotes the maximally conforming finite
paths of $\T$ with respect to $\sigma$ starting at a state $s_0$. 
Let $\str(\T)$ denote the set of all strategies which have no infinite
paths conforming to them.
Note that the length of the paths which conforms to a strategy in
$\str(\T)$ could still be variable.

To synthesize a strategy for 
$\T$ from a state $s_0 \in
\Sinit$ such that all maximal executions conforming to it reach a
state in $\S_f \subseteq \S$,
label the states in $\S_f$  with a unique proposition. Then,
let the property $\pr$ be the set of all traces
corresponding to paths which start in $\Sinit$ and end in $\S_f$,
and do not visit $\S_f$ in the middle.

\begin{definition}
A strategy $\sigma$ for the transition system $\T$ and an initial state
$s_0 \in \S$ is \textbf{winning} with respect to property
$\pr$ over the propositions $\P$, if $\sigma \in \str(\T)$ and
$\trace(\pathm_\sigma(\T,s_0)) \subseteq \pr$. 
\end{definition}

\paragraph{Cost of strategies}
The cost of a path is the sum of the weights on
the individual edges. 
Given a path $\zeta = s_0 u_0 s_1 \ldots$, define: 
\begin{equation*}
\W(\zeta) = \sum_{j} \W(s_j, u_j, s_{j+1}).
\end{equation*}
Consequently, the following proposition holds. 
\begin{prop} \label{prop:monotonic_cost}
	Given $\zeta = s_0 u_0 s_1 \ldots s_k$ and $\zeta' = s'_0 u'_0 $ $s'_1 \ldots s'_k$, if $\W(s_j, u_j,s_{j+1}) \leq \W(s'_j, u'_j,s'_{j+1})$ for all $j$, then $\W(\zeta) \leq \W(\zeta')$.
\end{prop}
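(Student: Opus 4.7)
The plan is to prove this by directly unfolding the definition of path cost and invoking monotonicity of finite summation. The proposition is a routine structural fact rather than a deep result, so the proof should be short and essentially mechanical.

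First, I would apply the definition $\W(\zeta) = \sum_j \W(s_j, u_j, s_{j+1})$ to both $\zeta$ and $\zeta'$. Since both paths have the same length (they both end at index $k$), the two resulting sums range over the same index set $j \in \{0, 1, \ldots, k-1\}$. Next, I would appeal to the hypothesis that $\W(s_j, u_j, s_{j+1}) \leq \W(s'_j, u'_j, s'_{j+1})$ termwise, together with the elementary fact that if $a_j \leq b_j$ for every $j$ in a finite index set, then $\sum_j a_j \leq \sum_j b_j$. (Formally this is a trivial induction on $k$, but I would not belabor it.) Chaining these gives $\W(\zeta) \leq \W(\zeta')$.

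There is no real obstacle here: the weights $\W(s_j, u_j, s_{j+1})$ lie in $\nnreals$, the sums are finite, and addition on the reals is monotone in each argument. The only subtle point worth mentioning is the implicit assumption that $\zeta$ and $\zeta'$ have equal length, which is built into the statement by indexing both paths up to $s_k$ and $s'_k$; this ensures the two summations have the same number of terms and align index-by-index.
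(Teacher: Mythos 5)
Your proof is correct and matches what the paper intends: the paper states this proposition without proof, treating it as an immediate consequence of the definition $\W(\zeta)=\sum_j \W(s_j,u_j,s_{j+1})$ and termwise monotonicity of finite sums, which is precisely your argument. Nothing is missing.
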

This monotonicity property seems trivial, but plays an important role in the analysis later. In fact,
results in this paper carry over for several other cost functions for paths such
as average weight and maximum weight. Over infinite paths, average cost, maximum cost or discounted sum are more natural. 
Nonetheless, the analysis only relies on the fact that the cost of a path is monotonic with respect to the cost on the transitions. For simplicity, we fix one of the definitions.


The cost of a strategy $\sigma$ of the transition system $\T$ with respect to an initial
state $s_0$ is defined as
\[\W(\T, \sigma, s_0) = \sup \{\W(\zeta) \,|\, \zeta \in \pathm_\sigma(\T, s_0)\}.\]
Accordingly, given a property $\pr$ over $\P$, the optimal cost of winning $\T$
from an initial state $s_0$ with respect to a property $\pr$ is
defined as
\begin{equation*}
\W(\T, s_0, \pr)= \inf \{\W(\T, \sigma,s_0) \,|\, \sigma \in \str(\T),
  \trace(\pathm_\sigma(\T, s_0))
  \subseteq \pr\}.
\end{equation*} 
The cost is taken to be infinity if the minimization is over an empty
set. 
Denote an optimal strategy that achieves the optimal cost as
$\sigma{(\T, s_0, \pr)}$. Note that the optimal strategy may not be unique or
exist.

\paragraph{Optimal control problem}
Given the transition system $\T$, an initial state $s_0$ and a property
$\pr$, the optimal control problem is to compute 
an optimal winning strategy from $s_0$ with respect to $\pr$, if it
exists, and the optimal cost of winning.

\section{Preorders for optimal control} 
\label{sec:pre}

In this section, a preorder on the class of transition
systems is defined such that it preserves the optimal cost of winning. In other words,
the optimal cost of winning in a system higher up in the ordering is
an upper bound on the optimal cost of winning in a system below it.
For this, the definition of alternating
simulations~\cite{alur98alt} is extended to include costs. 


\begin{definition}\label{def:simulation}
Given two transition systems $\T_i = (\S_i,\Sinit_i,$ $\U_i,\P, \Delta_i,
\L_i, \W_i)$, for $i = 1, 2$, a simulation from $\T_1$ to $\T_2$ is a
pair of relations $(\alpha, \beta)$, where $\alpha \subseteq \S_1
\times \S_2$ and $\beta \subseteq  \S_1 \times \U_1 \times \S_2 \times
\U_2 $, such that: 
	\begin{enumerate}
		\item $\forall~(s_1,s_2)\in\alpha$, $\L_1(s_1) = \L_2(s_2)$.
		\item $\forall~s_1 \in \S_1^{init}$, $\exists~s_2 \in
                  \S_2^{init}$ such that $(s_1, s_2) \in \alpha$;
		\item $\forall~(s_1, s_2)\in \alpha$ and $u_2
                  \in Enabled(s_2)$, $\exists~u_1 \in Enabled(s_1)$ such that: 
			\begin{enumerate}
				\item $(s_1, u_1, s_2, u_2)\in \beta$
				\item $\forall~(s_1, u_1, s'_1) \in
                                  \Delta_1$, $\exists~(s_2, u_2, s'_2)
                                  \in \Delta_2$ such that $(s'_1,
                                  s'_2) \in \alpha$ and $\W_1(s_1,
                                  u_1, $ $ s'_1) \leq \W_2(s_2, u_2, s'_2)$.
			\end{enumerate}	
	\end{enumerate}
\end{definition}
Let $\T_1 \preceq_{(\alpha, \beta)} \T_2$ denote that
$(\alpha,\beta)$ is a simulation from $\T_1$ to $\T_2$. If
there exists some $(\alpha, \beta)$ such that $\T_1 \preceq_{(\alpha,
  \beta)} \T_2$, then $\T_2$ simulates $\T_1$, and it is denoted as $\T_1 \preceq \T_2$. 

\begin{thm}
\label{thm:pre}
$\preceq$ is a preorder on the class of transition systems over a set
of propositions $\P$.
\end{thm}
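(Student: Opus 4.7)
The plan is to establish the two defining properties of a preorder, namely reflexivity and transitivity, by exhibiting explicit simulations in each case.

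For reflexivity, given a transition system $\T = (\S, \Sinit, \U, \P, \Delta, \L, \W)$, I would take the identity relations: $\alpha = \{(s,s) \mid s \in \S\}$ and $\beta = \{(s,u,s,u) \mid s \in \S, u \in \U\}$. Conditions~1 and~2(a) of Definition~\ref{def:simulation} then hold trivially since $(s,s) \in \alpha$ for every $s \in \Sinit$ and matching any enabled input to itself gives a tuple in $\beta$. Condition~2(b) follows by picking the same transition $(s,u,s') \in \Delta$ on the right-hand side, noting that $(s',s') \in \alpha$ and $\W(s,u,s') \leq \W(s,u,s')$.

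For transitivity, suppose $\T_1 \preceq_{(\alpha_{12},\beta_{12})} \T_2$ and $\T_2 \preceq_{(\alpha_{23},\beta_{23})} \T_3$. I would define the composition
\begin{align*}
\alpha_{13} &= \{(s_1,s_3) \mid \exists s_2.\; (s_1,s_2) \in \alpha_{12} \wedge (s_2,s_3) \in \alpha_{23}\}, \\
\beta_{13} &= \{(s_1,u_1,s_3,u_3) \mid \exists (s_2,u_2).\; (s_1,u_1,s_2,u_2) \in \beta_{12} \\
 &\qquad\qquad\qquad\qquad\qquad\qquad \wedge (s_2,u_2,s_3,u_3) \in \beta_{23}\},
\end{align*}
and verify that $(\alpha_{13},\beta_{13})$ is a simulation from $\T_1$ to $\T_3$.

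The verification is a sequential application of the hypotheses, with care taken about the order of quantifier choices. For Condition~1, given $s_1 \in \Sinit_1$ I would first apply the $\T_1 \preceq \T_2$ simulation to obtain $s_2 \in \Sinit_2$ with $(s_1,s_2) \in \alpha_{12}$, then the $\T_2 \preceq \T_3$ simulation to obtain $s_3 \in \Sinit_3$ with $(s_2,s_3) \in \alpha_{23}$; the label equality $\L_1(s_1) = \L_3(s_3)$ follows by chaining $\L_1(s_1) = \L_2(s_2) = \L_3(s_3)$. For Condition~2, given $(s_1,s_3) \in \alpha_{13}$ with witness $s_2$ and an input $u_3 \in \enabled(s_3)$, I would apply the $\T_2 \preceq \T_3$ simulation first (using $u_3$ to obtain a matching $u_2 \in \enabled(s_2)$ with $(s_2,u_2,s_3,u_3) \in \beta_{23}$), and then apply the $\T_1 \preceq \T_2$ simulation (using $u_2$ to obtain $u_1 \in \enabled(s_1)$ with $(s_1,u_1,s_2,u_2) \in \beta_{12}$), giving $(s_1,u_1,s_3,u_3) \in \beta_{13}$. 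For Condition~2(b), given $(s_1,u_1,s_1') \in \Delta_1$ I would use the two simulations in turn to produce $(s_2,u_2,s_2') \in \Delta_2$ and $(s_3,u_3,s_3') \in \Delta_3$ with matching $\alpha$-relations at the post-states, and then chain the cost inequalities $\W_1(s_1,u_1,s_1') \leq \W_2(s_2,u_2,s_2') \leq \W_3(s_3,u_3,s_3')$.

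The argument is essentially bookkeeping, so there is no serious obstacle; the only mild subtlety is keeping the direction of the ``$\forall u_2, \exists u_1$'' alternating quantification straight during transitivity, which is why the $\T_2 \preceq \T_3$ simulation must be applied before the $\T_1 \preceq \T_2$ one in Condition~2.
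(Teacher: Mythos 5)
Your proposal is correct and follows the same route as the paper: identity relations for reflexivity and relational composition of $(\alpha,\beta)$ pairs for transitivity. The paper states the composition and asserts the result without spelling out the verification; your additional observation about applying the $\T_2 \preceq \T_3$ simulation before the $\T_1 \preceq \T_2$ one (to respect the $\forall u_2\, \exists u_1$ quantifier direction in Condition~2) is exactly the right bookkeeping that the paper leaves implicit.
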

\begin{IEEEproof}
	Define $(\alpha, \beta)$ to be identity relations on the state
        and input spaces, then
        $\T \preceq_{(\alpha, \beta)} \T$, and
          hence $\preceq$ is reflexive. To show that $\preceq$ is
          transitive, suppose $\T_1 \preceq_{(\alpha_1,\beta_1)} \T_2$
          and $\T_2 \preceq_{(\alpha_2,\beta_2)} \T_3$. Define
          $\alpha$ such that $(s_1,s_3) \in \alpha$ if $(s_1,s_2)\in
          \alpha_1$ and $(s_2,s_3) \in \alpha_2$ for some $s_2$, and
          define $\beta$ such that $(s_1,u_1,s_3,u_3) \in \beta$ if
          $(s_1,u_1,s_2,u_2)\in \beta_1$ and $(s_2,u_2,s_3,u_3) \in
          \beta_2$ for some $(s_2, u_2)$. Then, $\T_1 \preceq_{(\alpha,\beta)} \T_3$. 
\end{IEEEproof}
The next result shows that $\preceq$ is an ordering on the transition
systems which ``preserves'' optimal control.
%
\begin{thm}
\label{thm:preserve}
Given two transition systems $\T_i = (\S_i,\Sinit_i,$ $ \U_i, \P, \Delta_i,
\L_i, \W_i)$ for $i = 1,2$, let $\pr$ be a property over a
set of propositions $\P$, $\T_1 \preceq_{(\alpha,\beta)} \T_2$ and
$(s_0, s'_0) \in \alpha$ for $s_0\in \Sinit_1$ and $s'_0 \in
\Sinit_2$. 
If there exists a winning strategy $\sigma_2$ for $\T_2$ from $s'_0$
with respect to $\pr$, then there exists a winning strategy $\sigma_1$
for $\T_1$ from $s_0$ with respect to $\pr$ such that $\W_1(\T_1,
\sigma_1,s_0) \leq \W_2(\T_2, \sigma_2,s'_0)$.
Hence, $\W_1(\T_1, s_0,\pr) \leq \W_2(\T_2, s'_0,\pr)$.
\end{thm}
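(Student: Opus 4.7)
The plan is to construct the required $\sigma_1$ by using the simulation to translate decisions of $\sigma_2$ back into $\T_1$, path by path. Concretely, I would define $\sigma_1$ inductively so that, simultaneously, every finite path $\pi_1 = s_0 u_0 s_1 \ldots s_k$ conforming to $\sigma_1$ is paired with a finite path $\pi_2 = s'_0 u'_0 s'_1 \ldots s'_k$ conforming to $\sigma_2$ satisfying $(s_i, s'_i) \in \alpha$ for all $i \le k$ and $(s_i, u_i, s'_i, u'_i) \in \beta$ for all $i < k$. The base pair is $(s_0, s'_0) \in \alpha$, which is given. For the inductive step on $\pi_1$ paired with $\pi_2$, if $\sigma_2(\pi_2)$ is undefined declare $\sigma_1(\pi_1)$ undefined; otherwise let $u'_k = \sigma_2(\pi_2) \in \enabled(s'_k)$ and apply clause (2) of Definition~\ref{def:simulation} to $(s_k, s'_k) \in \alpha$ and $u'_k$ to obtain $u_k \in \enabled(s_k)$ with $(s_k, u_k, s'_k, u'_k) \in \beta$; set $\sigma_1(\pi_1) := u_k$. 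For each successor $(s_k, u_k, s_{k+1}) \in \Delta_1$, clause (2b) furnishes a matching $(s'_k, u'_k, s'_{k+1}) \in \Delta_2$ with $(s_{k+1}, s'_{k+1}) \in \alpha$ and $\W_1(s_k, u_k, s_{k+1}) \le \W_2(s'_k, u'_k, s'_{k+1})$, giving the extended paired path (a choice function picks one such witness per successor, making the pairing well-defined).

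With this construction three properties follow directly. First, labels agree along the pair by clause (1), so $\trace(\pi_1) = \trace(\pi_2)$. Second, any infinite path conforming to $\sigma_1$ would lift to an infinite path conforming to $\sigma_2$, contradicting $\sigma_2 \in \str(\T_2)$; hence $\sigma_1 \in \str(\T_1)$. Third, if $\pi_1 \in \pathm_{\sigma_1}(\T_1, s_0)$ then the paired $\pi_2$ is maximally conforming to $\sigma_2$: if $\sigma_2(\pi_2)$ were defined, the construction would have defined $\sigma_1(\pi_1)$ and, since the chosen $u_k$ lies in $\enabled(s_k)$, some successor exists in $\T_1$, contradicting maximality of $\pi_1$. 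Consequently $\pi_2 \in \pathm_{\sigma_2}(\T_2, s'_0)$ and $\trace(\pi_1) = \trace(\pi_2) \in \pr$, so $\sigma_1$ is winning with respect to $\pr$.

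For the cost bound, the per-transition domination $\W_1(s_i, u_i, s_{i+1}) \le \W_2(s'_i, u'_i, s'_{i+1})$ combined with Proposition~\ref{prop:monotonic_cost} yields $\W_1(\pi_1) \le \W_2(\pi_2)$ for every pair. Taking suprema over $\pathm_{\sigma_1}(\T_1, s_0)$ on the left (which is majorized, via the pairing, by the supremum over a subset of $\pathm_{\sigma_2}(\T_2, s'_0)$) gives $\W_1(\T_1, \sigma_1, s_0) \le \W_2(\T_2, \sigma_2, s'_0)$. The second assertion then follows by taking the infimum over all winning $\sigma_2$ on the right and noting that each induces a winning $\sigma_1$ with no larger cost.

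The main obstacle I anticipate is purely a bookkeeping one: the construction of $\sigma_1$ is not given by a single local rule but requires a coherent \emph{choice} of simulating input $u_k$ and simulating successor $s'_{k+1}$ per $\T_1$-history, and one must verify that the pairing is consistent (i.e.\ different prefixes of the same $\sigma_1$-conforming path produce compatible $\T_2$-paths). Fixing a selection function once and for all at the start of the induction, and then arguing by induction on the length of $\pi_1$, sidesteps this cleanly; the remaining claims are straightforward consequences of clauses (1) and (2) of the simulation definition together with Proposition~\ref{prop:monotonic_cost}.
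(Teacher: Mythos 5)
Your construction is exactly the paper's: you build $\sigma_1$ together with a path-pairing map (the paper's partial function $\g:\pathf(\T_1)\to\pathf(\T_2)$) by induction on path length, using clause (2) of the simulation to select $u_k$ and clause (2b) to select matching successors, then transfer winning, maximality, and per-transition cost domination through the pairing. The argument is correct and essentially identical to the paper's proof, with your treatment of the choice function and the maximality/supremum steps being somewhat more explicit.
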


\begin{IEEEproof}
Let $\sigma_2$ be a strategy for $\T_2$ and $s'_0$.
In addition, define a partial mapping $\g:\pathf(\T_1) \to
\pathf(\T_2)$ such that the domain of
$\g$ is the set of all paths from $s_0$ that conform to $\sigma_1$, and for any path
$\zeta_1$ in the domain of $\g$, $\L_1(\zeta_1)  = \L_2(\g(\zeta_1))$, and
$\W_1(\zeta_1) \leq \W_2(\g(\zeta_1))$.
This construction ensures that if $\sigma_2$ is winning from $s'_0$
with respect to $\pr$, then so is $\sigma_1$ from $s_0$ and
$\W_1(\T_1, \sigma_1,s_0) \leq \W_2(\T_2, \sigma_2,s'_0)$.
We also ensure that if $\g(\zeta_1) = \zeta_2$, then
$(s_k, s'_k) \in \alpha$,
where $s_k$ and $s'_k$ are the end states of $\zeta_1$ and $\zeta_2$,
respectively.
Further, for any $\zeta_1$ in the domain of $\g$, $\zeta_1$ is a maximal
path conforming to $\sigma_1$ if and only if $\zeta_2$ is a maximal
path conforming to $\zeta_2$.

Next, define $\sigma_1$ and $\g$ by induction on the length of
words in their domain.
Set $\g(s_0) = s'_0$.
Suppose $\sigma_1$ for paths of length $k-1$ and $\g$ for
paths of length $k$, are defined such that the invariant holds.
Let $\zeta_1 = s_0u_0s_1 \ldots s_k$ conform to
$\sigma_1$. Then, $\g(\zeta_1)$ is defined.
Let $\g(\zeta_1) =  s'_0u'_0s'_1 \ldots s'_k$ and $(s_k, s'_k) \in
\alpha$.
If $\g(\zeta_1)$ is a maximal path conforming to $\sigma_2$, then $\sigma_1(\zeta_1)$ is not defined (i.e.,
$\zeta_1$ is not in the domain of $\sigma_1$).
Otherwise $\sigma_2(\g(\zeta_1)) = u'_{k}$.
Then, from the second condition of simulation, there
exists $u_{k}$ such that $(s_k, u_{k}, s'_k, u'_{k}) \in \beta$.
Choose $\sigma_1(\zeta_1) = u_{k}$.
For any $\zeta_2 = s_0u_0s_1 \ldots s_{k+1}$,
define $\g(\zeta_2) = s'_0u'_0s'_1 \ldots s'_{s+1}$ such
that $(s_{k+1}, s'_{k+1}) \in \alpha$ and $\W_1(s_k,u_{k},s_{k+1})$
$\leq \W_2(s'_k,u'_{k},s'_{k+1})$. 
It can be verified that the construction satisfies the inductive
invariant.
\end{IEEEproof}

\section{Abstraction/Refinement} \label{sec:ar}
In this section, the abstraction refinement procedure for constructing finite state
systems which simulate a given transition system is presented.
The state and input spaces are divided into finite
number of parts, and they are used as symbolic states and inputs,
respectively, in the abstract transition system. Henceforth, fix a transition system $\T = (\S,\Sinit,\U, \P, \Delta, \L,$ $
 \W)$. 

\begin{definition}
	A transition system $\T$ is a complete transition system if for all $s \in \S$, $\U = Enabled(s)$.
\end{definition}

\subsection{Abstraction} \label{sec:abstraction}

An abstraction function constructs an abstract transition
system $\abs(\T, \equivs, \equivu)$ given the transition
system $\T$, and two equivalence
relations $\equivs$ and $\equivu$ on the state-space $\S$ and the
input-space $\U$, respectively.
To ensure a well defined abstract transition system, $\equivs$ on $\S$ needs to respect both the set of labels $\L$ and the set of initial states $\Sinit$. In other words, the labels are the same for all equivalent states, and the initial states in the set $\Sinit$ are not equivalent to any states outside of the set $\Sinit$. More formally, an equivalence relation $\equivs$ on $\S$ respects $\L$, if for all $(s_1, s_2) \in \equivs$, $\L(s_1) = \L(s_2)$. Furthermore, an equivalence relation $\equivs$ on $\S$ respects $\Sinit$, if for all $(s_1, s_2) \in \equivs$ where $s_1 \in \Sinit$, $s_2 \in {\Sinit}$. 

\begin{definition} \label{def: abstraction}
Let $\equivs \subseteq \S \times \S$ and $\equivu \subseteq \U \times
\U$ be two equivalence relations of finite index such that $\equivs$
respects the labeling function $\L$ and the initial states $\Sinit$. 
$\abs(\T, \equivs, \equivu) = (\S',{\Sinit}', \U', \P, \Delta', \L',$ $ \W')$, where:
\begin{itemize}
	\item $\S' = \{ [s]_{\equivs} \mid s \in \S\} $ is the equivalence classes of $\equivs$.
	\item ${\Sinit}'= \{ [s]_{\equivs} \mid s \in {\Sinit}\} \subseteq \S'$ .
          \item $\U' = \{ [u]_{\equivu} \mid u \in \U\}$ is the equivalence classes of $\equivu$.
            \item $\Delta' = \{ (S_1, U, S_2) \mid  \exists s \in S_1, s'\in S_2, u \in U, ~ s.t. ~ (s, u, s') \in \Delta  \}$.
              \item
                For $S \in \S'$, $\L'(S) = \L(s)$ for any $s \in S$. 
                \item
                  For $(S_1, U, S_2) \in \Delta'$, $\W'(S_1, U, S_2) = \sup \{\W(s_1, u,$
                  $ s_2) \mid  s_1 \in S_1,s_2 \in S_2, u \in U, (s_1, u, s_2) \in \Delta\}.$
\end{itemize}
\end{definition}
Call $\T$ the concrete system and $\abs(\T, \equivs, \equivu)$
the abstract system. 
The next proposition states that the abstract system simulates the
concrete system.
\begin{prop}
\label{prop:abs}
If $\T$ is a complete transition system, $\T \preceq \abs(\T, \equivs, \equivu)$.
\end{prop}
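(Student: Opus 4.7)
The plan is to exhibit an explicit simulation pair $(\alpha,\beta)$ from $\T$ to $\abs(\T,\equivs,\equivu)$ and verify the two clauses of Definition~\ref{def:simulation} directly. The natural choice is
\[
\alpha = \{(s,[s]_{\equivs}) \mid s \in \S\}, \qquad
\beta = \{(s,u,[s]_{\equivs},[u]_{\equivu}) \mid s \in \S,\; u \in \enabled(s)\}.
\]
With this choice the state-space component of the simulation simply pairs each concrete state with the block of $\equivs$ it lies in, and the input component does the analogous thing for $\equivu$.

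First I would dispatch clause (1) of the simulation definition. The initial state requirement is immediate from the fact that $\equivs$ is assumed to respect $\Sinit$, so $s \in \Sinit$ gives $[s]_{\equivs} \in {\Sinit}'$ by construction of $\abs$. The label equality for pairs in $\alpha$ is equally immediate, because $\equivs$ respects $\L$ and $\L'([s]_{\equivs})$ is defined to be $\L(s)$ for any representative.

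The substantial step is clause (2). Take any $(s,[s]_{\equivs}) \in \alpha$ and any $U \in \enabled([s]_{\equivs})$; I would pick a concrete input $u \in U$ that is enabled at $s$, so that $(s,u,[s]_{\equivs},U) \in \beta$ by construction. For sub-clause (b), let $(s,u,s') \in \Delta$ be arbitrary. Then by the definition of $\Delta'$, the triple $([s]_{\equivs},U,[s']_{\equivs})$ is an abstract transition, and $(s',[s']_{\equivs}) \in \alpha$ by the definition of $\alpha$. The weight inequality $\W(s,u,s') \leq \W'([s]_{\equivs},U,[s']_{\equivs})$ then follows immediately because $\W'$ is defined as the supremum of $\W$ over all concrete witnesses, and $(s,u,s')$ is one such witness.

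The only delicate point I anticipate — and the step I would single out as the main obstacle — is the selection of a concrete input $u$ that is both a member of the class $U$ and enabled at the specific representative $s$. The abstract $\enabled([s]_{\equivs})$ is populated by any $u$-class that supports a transition from \emph{some} member of $[s]_{\equivs}$, so in principle $U$ might be witnessed by a sibling of $s$ rather than by $s$ itself. In writing the proof I would either invoke an implicit non-blocking/closure property used by the paper's construction, or strengthen $\beta$ to permit $u \in \enabled(s)$ whose class equals $U$ whenever such a $u$ exists and otherwise observe that the successor quantifier in clause (2b) is vacuous. Once this selection is justified, the remaining verifications are routine, and $\T \preceq_{(\alpha,\beta)} \abs(\T,\equivs,\equivu)$ follows.
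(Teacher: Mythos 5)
Your $(\alpha,\beta)$ is exactly the pair the paper uses (the paper writes $u \in \U$ rather than $u \in \enabled(s)$ in $\beta$, an immaterial difference), and the paper's entire proof consists of defining these two relations and asserting that Definition~\ref{def:simulation} is satisfied, so your verification of clauses (1) and (2) supplies strictly more detail than the original. The enabled-input selection point you single out is a genuine subtlety that the paper leaves unaddressed rather than a divergence from its argument; just note that your second fallback does not close it, since clause (2) still demands some $u_1 \in \enabled(s)$ with $(s,u_1,[s]_{\equivs},U)\in\beta$, and if $[u_1]_{\equivu}\neq U$ the concrete $u_1$-successors of $s$ need not be matched by abstract $U$-transitions, so the quantifier in (2b) is not vacuous.
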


\begin{IEEEproof}
	Define $\alpha = \{ (s,[s]_{\equivs}) \,|\, s \in \S \}$, and
  	$\beta = \{(s,u,[s]_{\equivs},$ $[u]_{\equivu}) \,|\, s \in
        \S$ and $u\in \U\}$. 
        Then, properties in Definition
	\ref{def:simulation} are satisfied. 
	
	Consider $\abs(\T, {\equivs}, \equivu) = (\S',{\Sinit}', \U', \P, \Delta', \L',
	        \W')$ as in Definition \ref{def: abstraction}. Define $(s,[s]_{\equivs}) \in \alpha$ for $s \in \S$, and
	        $(s,u,[s]_{\equivs},[u]_{\equivu}) \in \beta$ for $s \in \S$ and
	        $u\in \U$. 
			
			The first property in Definition
	        \ref{def:simulation} is satisfied by construction because for all $S \in \S'$, $\L'(S) = \L(s)$ for any $s \in S$. The second property also holds by construction of ${\Sinit}'$ where $\forall s \in \Sinit$, there exists a $[s]_{\equivs} \in {\Sinit}'$ such that $(s,[s]_{\equivs}) \in \alpha$.
			
			To verify the third property, consider any $(s_1,S_1) \in
	        \alpha$ and $U \in Enabled(S_1)$. Because $U \in \U'$, there exists a $u \in \U$ where
	        $[u]_{\equivu} = U$. Given that $Enabled(s_1) = \U$ for a complete transition system, $u \in Enabled(s_1)$. By definition of $\beta$, $(s, u, S_1, U) \in
	        \beta$. Furthermore, for $(s_1, u, s_2) \in \Delta$, there exists $S_1 \in \S'$ such that $(s_1, S_1) \in \equivs$, $S_2 \in \S'$ such that $(s_2, S_2) \in \equivs$, and $U \in \U'$ such that $(u, U) \in \equivu$. Furthermore, by construction, $(S_1,U,S_2) \in \Delta'$ if $(s_1, u, s_2) \in \Delta$. Thus, there exists a $(S_1,U,S_2) \in \Delta'$ where $(s_2,S_2) \in \alpha$. Lastly, $\W'(S_1, U, S_2) \geq \W(s_1, u, s_2) $
	        because $\W'$ is the maximum over all $s_1 \in S_1, u \in U,
	        s_2 \in S_2$ of $\W(s_1, u , s_2)$.
\end{IEEEproof}

\subsection{Refinement}
We can construct a sequence of abstract systems which are closer to
the original system than their predecessors in the sequence, by
choosing finer equivalence relations on the state and input spaces.

\begin{definition}
Let $\T_1$ and $\T_3$ be transition systems such that $\T_1 \preceq
\T_3$. 
A transition system $\T_2$ is said to be a refinement of $\T_3$ with
respect to $\T_1$, if $\T_1 \preceq \T_2 \preceq \T_3$.
\end{definition}

\begin{prop} 
\label{thm:refinement}
Let $\equivs, \equivs' \subseteq \S \times \S$ and $\equivu, \equivu'
\subseteq \U \times \U$ be equivalence relations of finite index such
that $\equivs' \subseteq \equivs$ and $\equivu' \subseteq \equivu$. 
Then, $\abs(\T, \equivs', \equivu')$ is a refinement of $\abs(\T, \equivs, \equivu)$ with respect to $\T$.
\end{prop}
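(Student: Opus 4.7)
The plan is to verify both parts of the definition of refinement, namely $\T \preceq \abs(\T, \equivs', \equivu')$ and $\abs(\T, \equivs', \equivu') \preceq \abs(\T, \equivs, \equivu)$. The first preorder relation is an immediate application of Proposition \ref{prop:abs} to the finer equivalence relations, which inherit from $\equivs$ and $\equivu$ the properties of finite index, respecting $\L$, and respecting $\Sinit$ (being refinements of relations with these properties).

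For the second relation, I would construct an explicit simulation $(\alpha,\beta)$ in the same style as Proposition \ref{prop:abs}. Since $\equivs' \subseteq \equivs$, every $\equivs'$-class is contained in a unique $\equivs$-class, and similarly for the input equivalences. Set $\alpha = \{([s]_{\equivs'}, [s]_{\equivs}) : s \in \S\}$ and $\beta = \{([s]_{\equivs'}, [u]_{\equivu'}, [s]_{\equivs}, [u]_{\equivu}) : s \in \S,\, u \in \U\}$. The initial-state and labeling clauses of Definition \ref{def:simulation} are immediate because both abstractions inherit initial sets and labels pointwise from $\T$ and $\equivs$ already respects $\Sinit$ and $\L$. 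For the transition clause, any fine-grained transition $([s]_{\equivs'}, [u]_{\equivu'}, [s']_{\equivs'})$ is witnessed by some concrete triple $(s,u,s') \in \Delta$; the same concrete triple witnesses the coarse-grained transition $([s]_{\equivs}, [u]_{\equivu}, [s']_{\equivs})$, and the required $\alpha$-relation on successors holds by construction. The cost inequality follows because the supremum defining the coarse-grained weight is taken over a superset of the supremum defining the fine-grained weight (more concrete triples are lumped together by the coarser partition).

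The main obstacle will be clause (2) of Definition \ref{def:simulation}: for each coarse-grained input enabled at a coarse class, one must exhibit a matching fine-grained input enabled at the corresponding fine class, together with successors aligned by $\alpha$ and weights dominated. This is the same delicate step encountered in Proposition \ref{prop:abs}, so its argument provides the natural template to follow. Once both simulations have been verified, the refinement statement is just the conjunction of the two preorder facts, completing the proof.
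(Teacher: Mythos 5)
Your proposal is correct and follows essentially the same route as the paper: the first preorder fact is obtained from Proposition~\ref{prop:abs}, and the second via the simulation pair $\alpha = \{([s]_{\equivs'}, [s]_{\equivs}) \mid s \in \S\}$, $\beta = \{([s]_{\equivs'}, [u]_{\equivu'}, [s]_{\equivs}, [u]_{\equivu}) \mid s \in \S,\ u \in \U\}$, which is exactly the pair the paper defines. Your added details (the supremum over a superset for the weight inequality, and the concrete witness triple transferring fine transitions to coarse ones) simply fill in the verification that the paper leaves as "properties in Definition~\ref{def:simulation} are satisfied."
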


\begin{IEEEproof}
First, $\T \preceq \abs(\T, \equivs', \equivu')$ follows from
Proposition \ref{prop:abs}.
	Define $\alpha = \{([s]_{{\equivs}'}, $ $[s]_{{\equivs}})
        \,|\, s \in \S\}$ and $\beta = \{ ([s]_{{\equivs}'},
        [u]_{\equivu'},$ $[s]_{{\equivs}},$ $ [u]_{\equivu}) \,|\, s
        \in \S$ and $u \in \U\}$. Then, properties in Definition
	\ref{def:simulation} are satisfied for $\abs(\T,$ $ \equivs',
        $ $\equivu') \preceq_{(\alpha, \beta)} \abs(\T,$ $ \equivs,
        \equivu)$, and thus, $\T \preceq \abs(\T,$ $ \equivs', $
        $\equivu') \preceq \abs(\T,$ $ \equivs, \equivu)$. 
\end{IEEEproof}

\section{Optimal Control of Piecewise Linear Systems} \label{sec:pwa}

This section considers an optimal control problem for discrete-time piecewise linear systems.
The abstraction refinement approach is applied to construct a series of controllers with improving suboptimal costs that converge to the optimal cost under the existence of a robust optimal control.

\subsection{Problem Formulation}

A discrete-time piecewise linear system is a tuple $(\X, \Xinit, \U, \P,$ $\{(A_i, B_i, P_i)\}_{i \in [m]},$ $ \Ld, \J)$, where the state-space $\X \subseteq \reals^n$ and the input-space $\U \subseteq \reals^p$  are compact sets, $\Xinit \subseteq \X$ is the set of initial states, $\P$ is a finite set of propositions, $A_i \in \reals^{n \times n}, B_i \in \reals^{n \times p}$ and $P_i$ is a polyhedral set, such that $\{P_i\}_{i\in m}$ is a polyhedral partition of $\X$, $\Ld: [m] \to \P$ is a labeling function and $\J: \X \times \U \to \nnreals$ is a continuous cost function. Note that $A_i$ and $B_i$ can be the same for different $i$.
We associate a unique label to each region $P_i$. We could have assigned different labels to different regions in some polyhedral partition of $P_i$; we do not lose expressiveness here, since, the latter can be transformed to the former problem by considering a finer partition whose regions are the regions partitioning each $P_i$ according to the label. 

Given an initial state $x_0 \in \Xinit$ and a sequence of control inputs $\u = \{u_t\}_{t \in [k]}$, where $u_t \in \U$, $\phi(x_0, \u) = \{x_t\}_{t \in [k+1]}$ is the sequence of states visited under the control $\u$, where
$x_{t+1} = A_t x_t + B_t u_{t}$, and $ (A_t, B_t) = (A_i, B_i)$ if $x_t \in P_i$.
The cost of the sequence $\phi(x_0, \u)$, $\J(\phi(x_0, \u))$, is given by $\sum_{t \in [k]} \J(x_{t+1}, u_t)$.
We define the partition sequence of $\{x_t\}_{t \in [k+1]}$, denoted $\ps(\{x_t\}_{t \in [k+1]})$, to be the sequence of partitions visited by the states, namely, $P_{i_1}, \ldots, P_{i_{k+1}}$ such that $x_t \in P_{i_t}$ for all $t \in [k+1]$.
\begin{problem}[Optimal control problem]
\label{prob:opt}
$ $\\
Given an $n$-dimensional discrete-time piecewise linear system $\dss = (\X, \Xinit, \U, \P,$ $\{(A_i, B_i, P_i)\}_{i \in [m]}, \Ld, \J)$, a state $x_0^* \in \Xinit$ and a regular property $\Pi$ over $\P$, find a sequence of control inputs $\u^*$ for  which $\Ld(\phi(x_0^*, \u^*)) \in \Pi$ and $\J(\phi(x_0^*, \u^*))$ is minimized.
\end{problem}

\begin{remark}
Although the property $\Pi$ is over finite sequences, it could potentially contain finite sequences of unbounded lengths (i.e. no fixed upper bound on the sequence length).
Hence, the Problem \ref{prob:opt} is not the same as a classical finite horizon problem, because the optimal control sequence length is not fixed a priori. 
\end{remark}

\subsection{Solution}

%
%
%
%

A discrete-time piecewise linear system $\dss = (\X, \Xinit, \U,$ $ \P, \{(A_i,B_i, P_i)\}_{i \in [m]},$ $ \Ld,$ $ \J)$ can be represented as a weighted transition system, $\T_{\dss} = (\X, \Xinit, \U, \P, \Delta, \L, $ $\W)$ where $\Delta = \{(x, u,$ $x') \in \X \times \U \times \X \,|\, x'  = A x + B u, \mbox{ where }$ $ (A, B) = (A_i, B_i) \mbox{ for } x \in P_i\}$, $\L(x) = \Ld(i)$ where $x \in P_i$, and $\W(x, u, x') = \J(x', u)$. Consequently, Problem \ref{prob:opt} is equivalent to the following problem:

\begin{problem}[Optimal strategy problem]
\label{prob:opt_str}
$ $\\
Given a weighted transition system $\T_{\dss} = (\X, \Xinit, \U, \P,\Delta, $ $\L,\W)$, a state $x_0^* \in \Xinit$ and a regular property $\Pi$ over $\P$, find an optimal winning strategy $\sigma(\T_\dss,x_0^*,\Pi)$ for  which the optimal cost of winning $\T_\dss$ with respect to $\Pi$, $\W(\T_\dss, x_0^*, \Pi)$, is achieved.
\end{problem}

\begin{algorithm}[b]
	\caption{$\optcar$ (Abstraction Refinement Procedure)}
	\label{alg:refinement}
	\begin{algorithmic}
		\Require System $\dss$, Property $\Pi$ as a finite state automaton,  initial state  $x_0^*$, rational number $0 < \epsilon_0$
		\State Set $\epsilon := \epsilon_0$
		\While {true}
		    \State $\hat{\T}, \hat{x}_0 := \consabs(\dss, \epsilon)$
			\State $J, \hat{\sigma} := \solve(\hat{\T}, \hat{x}_0, \Pi)$
                        \State $\sigma_\dss := \extract(\hat{\sigma}, \hat{\T}, \dss)$
                        \State Output $\sigma_\dss$ and $J$
			\State $\epsilon := \frac{\epsilon}{2}$
		\EndWhile
	\end{algorithmic}
\end{algorithm}

Note that since $\T_{\dss}$ is input deterministic, $\sigma(\T_\dss,x_0^*,\Pi)$ will correspond to a unique path starting from $x_0^*$.
In general, solving Problem \ref{prob:opt_str} is difficult, since, $\T_\dss$ is a infinite state system; hence, we focus on synthesizing suboptimal strategies using Algorithm \ref{alg:refinement}. As an overview, Algorithm \ref{alg:refinement} first partitions the state space into grids of a particular size, and constructs an abstract system for the system $\T_\dss$. Then, it computes the optimal cost $J$ and strategy of the abstract system through a two-player game. A suboptimal strategy for $\T_\dss$ can then be extracted from the strategy of the abstract system with the cost upper bounded by $J$. If the upper bound $J$ is not zero, refine the state space partitions using smaller grids, and repeat the whole process to reduce the cost $J$. As a result, this algorithm outputs a sequence of suboptimal strategies, whose costs converge to that of the optimal cost.

More precisely, in each iteration, Algorithm \ref{alg:refinement} performs the following sequence of steps. First, it constructs a finite state abstraction $\hat{\T}$ of $\dss$ using the function $\consabs(\dss, \epsilon)$. 
$\consabs(\dss, \epsilon)$ outputs $\abs(\T_\dss, \equiv_X^\epsilon, \equivu^\epsilon)$, where
$\equiv_X^\epsilon$ and $\equivu^\epsilon$ are equivalence relations whose equivalences classes are the elements of $\grid(\X, \epsilon)$ and $\grid(\U, \epsilon)$,  respectively. Define the initial abstract state as $\hat{x}_0 := [x_0^*]_{\equiv_X^\epsilon}$. This step solves $|S|^2 |U|$ optimizations where $|S|$ is the number of states in $\hat{\T}$ and $|U|$ is the number of control inputs in $\hat{\T}$. These optimizations can be computed in parallel. Next, $\solve(\hat{\T}, \hat{x}_0, \Pi)$ computes the optimal cost of winning $J = \W(\hat{\T}, \hat{x}_0, \Pi)$ with respect to $\Pi$ in the finite state transition system $\hat{\T}$ and the corresponding strategy $\hat{\sigma} = \sigma{(\hat{\T}, \hat{x}_0, \Pi)}$ for $\hat\T$ through a two-player game
(see Algorithm \ref{alg:2player} of Section \ref{sec:fts} for more details). 

Finally, $\extract(\hat{\sigma} , \hat{\T}, \dss)$ outputs a suboptimal strategy/controller $\sigma_\dss$ whose cost is bounded by the optimal cost $J$ for the abstract system.
The existence of $\sigma_\dss$ given $\hat{\sigma}$ is guaranteed by Theorem \ref{thm:preserve}. 
Essentially, $\sigma_\dss$ provides the sequence of inputs $u^*$ as required by Problem \ref{prob:opt}. 
To illustrate the relationship between $\sigma_\dss$ and $\hat{\sigma}$, let $u^*_0, u^*_1, \ldots, u^*_{t-1}$ be the inputs which have been computed, and let $s^*_0, s^*_1, \ldots, s^*_{t}$ be the sequence of state generated by the inputs.
The $t$-th control input $u^*_t$ is obtained by finding the minimum cost transition $(s^*_t, u^*_t, s^*_{t+1})$, where $u^*_t \in U$ and $s^*_{t+1} \in S'$. The set $U$ is defined as 
$U = \hat{\sigma}([s^*_0]_{\equiv^\epsilon_X} [u^*_0]_{\equivu^\epsilon} \ldots [s^*_t]_{\equiv^\epsilon_X})$,  and $S'$ is the union of all $S''$ such that 
$([s^*_t]_{\equiv^\epsilon_X}, U, S'')$ is a transition of $\hat{\T}$.
The inputs $u^*_t$ can be computed by solving a linear program when the cost function is linear and the equivalence classes are polyhedral sets.

In the beginning, when the partitioning is coarse, a winning strategy $\hat{\sigma}$ might not exist even if the underlying system $\dss$ has an optimal solution. However, if one continues to refine the grid, a winning strategy will exist if $\dss$ has an optimal solution, and its cost of winning will converge to the optimal cost. See Section \ref{sec:correctness} for the proof. In addition, the algorithm can be terminated at a specific iteration based on applications and computational resources. 

Algorithm \ref{alg:refinement} can in fact be instantiated to any class of hybrid systems.
However, the computational complexity of the optimization problems that will need to be solved in the construction of the abstract system and the extraction of a winning strategy will depend on the class of dynamics and the type of the cost function.
For a piecewise linear system with linear cost function, the maximization during the abstraction procedure is a linear program, because the partitions of $\equiv_X$ and $\equivu$ are polyhedral sets (grid elements). If computation resources are limited, the best suboptimal controller found with respect to the cost upper bound $J$ is guaranteed to generate a trajectory that satisfies the properties $\Pi$ and has cost no greater than $J$. 


\subsection{Analysis of Algorithm \ref{alg:refinement}}
\label{sec:correctness}

This section analyzes the output of Algorithm \ref{alg:refinement}, and shows that the suboptimal cost converges to the optimal cost if a robust optimal strategy exists.
Note that even without the existence of a robust optimal strategy, we can still guarantee that the costs due to refinement are non-increasing.

\begin{definition}
\label{def:robust}
An input sequence $\u$ is said to be \emph{robust} with respect to an initial state $x_0$ if there exists $\epsilon_t > 0$ such that $\B_{\epsilon_t}(x_t) \subseteq P_{i_t}$ for all $t \in [k+1]$, where $\phi(x_0, \u) = \{x_t\}_{t \in [k+1]}$, and $\ps(\phi(x_0, \u)) = \{P_{i_t}\}_{t \in [k+1]}$.
\end{definition}
Let us denote the elements in the iteration of Algorithm \ref{alg:refinement} corresponding to a particular $\epsilon$ as $\hat{\T}_\epsilon$ for $\hat{\T}$, $\hat{x}_0^\epsilon$ for $\hat{x}_0$, $J_\epsilon$ for $J$, $\hat{\sigma}_\epsilon$  for $\hat{\sigma}$ and $\sigma_\epsilon$ for $\sigma_\dss$.

\begin{thm} \label{thm:main_converge}
If there exists a robust optimal control $\u^*$ with respect to $x_0^*$ for Problem \ref{prob:opt}, the sequence of sub-optimal costs $\{J_{\epsilon_0/2^i}\}_{i \in \nnint}$ output by Algorithm \ref{alg:refinement} converges to the optimal cost $J_{\textit{opt}} = \W(\T_\dss,x_0,\Pi)$. Furthermore, for each sub-optimal cost $J_{\epsilon_0/2^i}$, there exists a suboptimal winning strategy $\sigma_{\epsilon_0/2^i}$ with cost of winning $J_{\epsilon_0/2^i}$.
\end{thm}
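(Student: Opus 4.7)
The plan is to split the proof into two parts: first, establish $J_{\epsilon_0/2^i} \geq J_{\textit{opt}}$ for every $i$ together with the existence of a sub-optimal winning strategy $\sigma_{\epsilon_0/2^i}$; second, show that the sub-optimal costs approach $J_{\textit{opt}}$ from above as $i \to \infty$.

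For the first part, I would invoke the preservation machinery of Section~\ref{sec:pre}. By Proposition~\ref{prop:abs}, $\T_\dss \preceq \abs(\T_\dss, \equiv_X^\epsilon, \equiv_U^\epsilon) = \hat{\T}_\epsilon$ under the simulation that sends each $x \in \X$ to its grid cell $[x]_{\equiv_X^\epsilon}$; in particular $x_0^*$ is related to $\hat{x}_0^\epsilon$. Since $\solve$ returns a winning strategy $\hat{\sigma}_\epsilon$ on $\hat{\T}_\epsilon$ with cost $J_\epsilon$, Theorem~\ref{thm:preserve} gives a concrete winning strategy whose cost is at most $J_\epsilon$; this is precisely what $\extract$ produces as $\sigma_\epsilon$. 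Taking the infimum over concrete winning strategies yields $J_{\textit{opt}} \leq J_\epsilon$ for every $\epsilon$ on the schedule.

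For convergence I would exhibit a \emph{candidate} abstract winning strategy whose cost tends to $J_{\textit{opt}}$; since $\solve$ returns the optimal abstract cost, $J_\epsilon$ is bounded above by the cost of any such candidate. Let $\u^* = (u_0^*, \ldots, u_{k-1}^*)$ be the optimal concrete sequence given by Assumption~\ref{assump}, with trajectory $\{x_t^*\}_{t \in [k+1]}$. For each sufficiently small $\epsilon$ I would define the strategy that plays $[u_t^*]_{\equiv_U^\epsilon}$ at step $t$, and prove by induction on $t$ that every abstract state $S$ reachable at time $t$ under this strategy satisfies $S \subseteq \B_{\rho_t \epsilon}(x_t^*)$ for a constant $\rho_t$ depending only on $t$ and the norms $\|A_i\|_\infty, \|B_i\|_\infty$. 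Because Assumption~\ref{assump} supplies a uniform $\epsilon_T > 0$ with $\B_{\epsilon_T}(x_t^*) \subseteq P_{i(t)}$, for $\epsilon$ small enough the tube at every $t$ lies entirely inside the correct polyhedral region, so the abstract transitions genuinely use the dynamics $(A_{i(t)}, B_{i(t)})$; since $\equiv_X^\epsilon$ respects $\L$, every state in the tube carries the label $\L(x_t^*)$, so the produced trace belongs to $\Pi$. Continuity of $\J$ on the compact set $\X \times \U$ then drives the supremum cost on the $t$-th abstract transition to $\J(x_{t+1}^*, u_t^*)$, so the worst-case strategy cost tends to $J_{\textit{opt}}$.

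The main obstacle is controlling the branching introduced by abstract non-determinism: from a single abstract state with a fixed input class, transitions fan out to several grid cells, and the candidate must remain winning on every branch. The tube invariant $S \subseteq \B_{\rho_t \epsilon}(x_t^*)$ is the workhorse that tames this branching, propagating via an estimate of the form $\rho_{t+1} \leq \|A_{i(t)}\|_\infty\, \rho_t + 1 + \|B_{i(t)}\|_\infty$ that makes crucial use of the finite horizon $k$ of $\u^*$. Once this invariant is in place, combining the lower bound $J_{\textit{opt}} \leq J_{\epsilon_0/2^i}$ from the first part with the upper bound from the candidate as $\epsilon_0/2^i \to 0$ delivers the claimed convergence $J_{\epsilon_0/2^i} \to J_{\textit{opt}}$.
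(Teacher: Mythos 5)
Your proposal is correct and rests on the same core idea as the paper --- a shrinking tube of reachable abstract states around the optimal trajectory $\{x_t^*\}$, kept inside the correct polyhedral regions by the $\epsilon_T$ of Assumption~\ref{assump}, with continuity of $\J$ driving the worst-case cost to $J_{\textit{opt}}$ --- but you package it differently. The paper splits the upper bound into Lemma~\ref{lemma:chain} (build a \emph{chain} winning strategy on a bespoke, possibly non-uniform abstraction whose classes are the neighborhoods $N_t^x, N_t^u$) followed by Lemma~\ref{lemma:grid} (refine those neighborhoods into the uniform $\epsilon_0/2^i$ grid via Proposition~\ref{thm:refinement}), and it relies on the auxiliary estimates of Lemmas~\ref{lemma:xn} and~\ref{lemma:robust}; you instead work directly on the uniform grid with a single layered candidate strategy ``play $[u_t^*]_{\equiv_U^\epsilon}$ at step $t$'' and an inductive tube invariant $S \subseteq \B_{\rho_t\epsilon}(x_t^*)$ whose recurrence $\rho_{t+1} \le \norm{A_{i(t)}}_\infty \rho_t + 1 + \norm{B_{i(t)}}_\infty$ is essentially the content of Lemma~\ref{lemma:xn}. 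Two things your route buys: you state explicitly the lower bound $J_{\textit{opt}} \le J_\epsilon$ (via Proposition~\ref{prop:abs} and Theorem~\ref{thm:preserve}), which the paper leaves implicit, and because your candidate exists for \emph{all} sufficiently small $\epsilon$ you get convergence of the full sequence without the paper's opening monotonicity observation $J_{\epsilon_0/2^i} \le J_{\epsilon_0/2^j}$ for $i > j$. One small caution: the claim that ``every state in the tube carries the label $\L(x_t^*)$'' does not follow from $\equiv_X^\epsilon$ respecting $\L$ alone (that only makes labels constant within a single cell, not across the several cells the tube may span); you additionally need the tube to shrink inside the region where $\L$ equals $\L(x_t^*)$, which is the same implicit appeal to an interior condition around $x_t^*$ that the paper makes in Lemma~\ref{lemma:chain}, so this is a shared imprecision rather than a gap peculiar to your argument.
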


The rest of this section proves Theorem \ref{thm:main_converge}. 
Proofs of some lemmas are provided in the appendix to improve readability of this section.
Henceforth, let $\u^* = \{u^*_t\}_{t\in[k]}$ be a robust optimal control input sequence with respect ot $x_0^*$ and $\zeta^* = \phi(x_0^*, \u^*) = \{x^*_t\}_{t\in [k+1]}$ be the corresponding optimal trajectory for Problem \ref{prob:opt}. The proof also requires a special kind of strategy that ensures that there is a unique path conforming to this strategy, by choosing inputs that result in exactly one successor state (see Figure \ref{fig:partition}).
\begin{definition}	
	A \textbf{chain strategy} for a transition system $\T$ and an
        initial state $s_0$ is a strategy $\sigma \in \str(\T)$ such
        that there is one path in $\pathm_\sigma(\T, s_0)$.
\end{definition}
\begin{figure}[t]
      \centering
	  \def\svgwidth{\columnwidth}
	  \input{./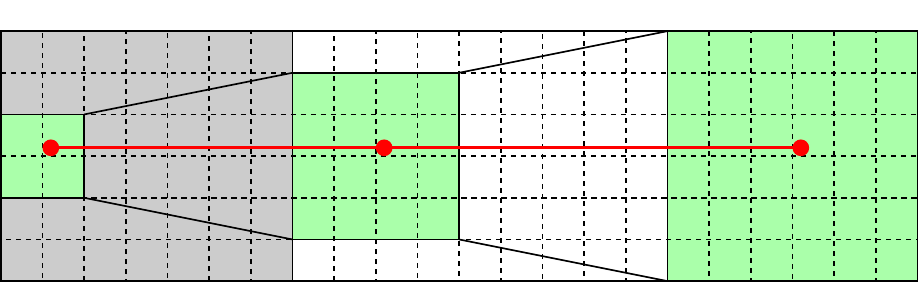_tex}
      \caption{An illustration of chain strategy and refinement. The domain is separated into two areas (gray and white) where two different dynamics apply. The red dots are the optimal path.}
      \label{fig:partition}
\end{figure}

To prove Theorem \ref{thm:main_converge}, first, we show that for any trajectory whose initial state and inputs have a bounded deviation from that of the optimal trajectory, the trajectory itself will have a bounded deviation from the optimal trajectory.

\begin{lemma} \label{lemma:xn}
There exist bounds $M_x > 0$ and $M_u > 0$ and constance $c_1, c_2 \geq 0$ that depend on $\ps(\phi(x_0^*, \u^*))$, such that for all $\epsilon_x \in [0,M_x]$ and $\epsilon_u\in [0, M_u]$, if $x_0 \in \B_{\epsilon_{x}}(x^*_0)$ and $u_t\in \B_{\epsilon_{u}}(u^*_t)$ $\forall t \in [k]$, where $\u = \{u_t\}_{t\in[k]}$ and $\phi(x_0, \u) = \{x_t\}_{t \in [k+1]}$, then for all $t \in [k]$,
	\begin{gather*}
		\norm{x_{t+1} - x^*_{t+1}}_{\infty} \leq c_1 \epsilon_{x} +  c_2 \epsilon_{u}, \\
        \ps(\phi(x_0, \u)) = \ps(\phi(x_0^*, \u^*)).
	\end{gather*}
\end{lemma}
This lemma implies that the error from the optimal state at any time is bounded linearly by the error from the initial state and the largest error of control inputs from the optimal ones. As $\epsilon_x$ and $\epsilon_u$ decrease to zero, the state error decreases to zero. Although the constants $c_1$ and $c_2$ depend on $t$, $t$ would not make the constants unbounded because $t$ is finite. Next, we show that the suboptimal cost of this trajectory is bounded.

\begin{lemma}\label{lemma:robust}
Given the cost function in Problem \ref{prob:opt_str}, there exist bounds $M_x > 0$ and $M_u > 0$ and constants $c_3, c_4 \geq 0$ such that for all $\epsilon_x \in [0, M_x]$ and $\epsilon_u \in [0, M_u]$, if $x_0 \in \B_{\epsilon_{x}}(x^*_0)$ and $u_t\in \B_{\epsilon_{u}}(u^*_t)$ $\forall t \in [k]$, where $\u = \{u_t\}_{t \in [k]}$ and $\zeta = \phi(x_0, \u)$,
	\begin{gather*} 
		|\W(\zeta)- \W(\zeta^*)|  \leq c_3 \epsilon_{x} + c_4 \epsilon_{u}, \\
        \ps(\phi(x_0, \u)) = \ps(\phi(x_0^*, \u^*)).
	\end{gather*} 
\end{lemma}
This lemma states that given a continous cost function, there will be a small neighborhood of the optimal trajectory in which the trajectories will go through the same partition sequence and difference in the cost is bounded and decreases to zero if $\epsilon_{x}$ and $\epsilon_u$ decrease to zero.

At this point, we have shown that the suboptimal cost is bounded by terms that depends on the input error and initial state error. Next, we show that given a specific cost sub-optimality, there exists a strategy that satisfies this cost error. In other words, we can construct an abstraction to give a chain strategy that satisfies a certain cost error bound.

\begin{lemma}\label{lemma:chain}
	Given any $\delta > 0 $, there exists a chain winning strategy $\sigma$ for some $\hat{\T} = \abs(\T_\dss, \equiv_X,$ $\equivu)$ such that 
	$$|\W(\hat{\T},\sigma, x'_0) - \W(\T_\dss,x_0^*,\Pi)| \leq \delta$$ 
	where $x'_0 = [x_0^*]_{\equiv_X}$.
\end{lemma}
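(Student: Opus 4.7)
The plan is to construct a tailored (non-uniform) abstraction whose equivalence classes track the optimal trajectory $\zeta^* = \{x^*_t\}_{t \in [k+1]}$ produced by $\u^* = \{u^*_t\}_{t \in [k]}$, so that (i) the abstract transitions along $\zeta^*$ are deterministic (yielding a chain) and (ii) the abstract cost is within $\delta$ of $\W(\zeta^*) = \W(\T_\dss,x_0^*,\Pi)$ by continuity of $\J$. First I would pick small parameters $\epsilon_x, \epsilon_u > 0$ and define radii by the one-step linear recursion $r_0 = \epsilon_x$, $r_{t+1} = \norm{A_{i(t)}}_\infty r_t + \norm{B_{i(t)}}_\infty \epsilon_u$, where $(A_{i(t)}, B_{i(t)})$ are the matrices of the polyhedron $P_{i(t)}$ that contains $x^*_t$. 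Since $k$ is finite, each $r_t$ is a fixed linear function of $(\epsilon_x, \epsilon_u)$ that tends to $0$ with them. Shrink $\epsilon_x, \epsilon_u$ until: (a) each $r_t$ is smaller than $\epsilon_t$ of Assumption \ref{assump}, so $\B_{r_t}(x^*_t) \subseteq P_{i(t)}$ (and, by the same interiority, lies inside the label region of $x^*_t$ and, for $t=0$, inside $\Xinit$); (b) the balls $\B_{r_t}(x^*_t)$ (respectively $\B_{\epsilon_u}(u^*_t)$) are pairwise either equal or disjoint; (c) the cost bound established below is at most $\delta$. Then define $\equiv_X$ with equivalence classes $R_t := \B_{r_t}(x^*_t)$ (one per distinct $x^*_t$) together with the single junk class $\X \setminus \bigcup_t R_t$, and analogously $\equivu$ with classes $V_t := \B_{\epsilon_u}(u^*_t)$ and a junk class, so that by (a), $\equiv_X$ respects $\L$ and $\Xinit$, and hence $\hat\T := \abs(\T_\dss, \equiv_X, \equivu)$ is well defined.

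Next, since $R_t \subseteq P_{i(t)}$, any concrete transition from $x \in R_t$ under $u \in V_t$ uses the matrices $A_{i(t)}, B_{i(t)}$, and the identity $x' - x^*_{t+1} = A_{i(t)}(x - x^*_t) + B_{i(t)}(u - u^*_t)$ combined with the recursion yields $\norm{x' - x^*_{t+1}}_\infty \le r_{t+1}$, so $x' \in R_{t+1}$. Hence $R_{t+1}$ is the unique abstract successor of $R_t$ under $V_t$ in $\hat\T$. The chain strategy $\sigma$ is defined inductively: on any history ending at $R_t$ (for $t < k+1$) choose $V_t$, and leave $\sigma$ undefined at $R_{k+1}$. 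This forces $\pathm_\sigma(\hat\T, R_0)$ to be the singleton $\{R_0 V_0 R_1 V_1 \ldots R_{k+1}\}$. Because every point in $R_t$ carries the label $\L(x^*_t)$, this unique path's trace equals $\trace(\zeta^*) \in \Pi$, so $\sigma$ is winning.

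It remains to control the cost. From the chain property, $\W(\hat\T, \sigma, x'_0) = \sum_{t \in [k]} \W'(R_t, V_t, R_{t+1})$, and by the definition of $\abs$ each summand equals $\sup\{\J(x', u) : x \in R_t,\, u \in V_t,\, x' = A_{i(t)}x + B_{i(t)}u\}$. Since the admissible $(x, u)$ sets are compact and shrink to $\{(x^*_t, u^*_t)\}$ as $\epsilon_x, \epsilon_u \to 0$, continuity of $\J$ forces the sup to converge to $\J(x^*_{t+1}, u^*_t)$. Summing over the finite horizon gives $\W(\hat\T, \sigma, x'_0) \to \sum_{t \in [k]} \J(x^*_{t+1}, u^*_t) = \W(\zeta^*)$, so choosing $\epsilon_x, \epsilon_u$ small enough in step (c) above yields the desired $\delta$-bound. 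The main obstacle is item (i) from the opening, namely maintaining deterministic abstract successors so that a chain strategy actually exists, because a uniform grid (as used in Algorithm \ref{alg:refinement}) does not in general produce a unique successor; this is why the construction uses a custom ball partition whose radii are tuned to the local one-step Lipschitz constants of the dynamics.
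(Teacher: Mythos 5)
Your construction is correct and follows essentially the same route as the paper: a tube of shrinking neighborhoods $N_t^x$, $N_t^u$ around the optimal trajectory whose one-step forward-invariance makes the abstract successor unique (hence a chain), with the cost controlled by continuity of $\J$ (the paper packages this step as the bound $c_3\epsilon_x + c_4\epsilon_u \leq \delta$ from Lemma \ref{lemma:robust}, whereas you rederive it directly). The only substantive difference is that the paper additionally arranges each neighborhood to coincide with an element of $\grid(\cdot,\epsilon_0/2^i)$ for some $i$ --- not needed for this lemma's statement, but essential for the refinement to a uniform grid in Lemma \ref{lemma:grid}, so your ball-shaped classes would need that extra alignment step before the rest of the convergence argument goes through.
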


\begin{IEEEproof}
The broad idea will be to identify neighborhoods $N_t^x$ around $x_t^*$ and $N_t^u$ around $u_t^*$ such that $N_t^x$ is contained in the region of the partition containing $x_t^*$, all transitions from $N_t^x$ and $N_t^u$ lead to $N_{t+1}^x$ and the neighborhoods $N_t^x$ and $N_t^u$ are contained in $B_{M_x}(x_t^*)$ and $B_{M_u}(x_u^*)$.
Further, we will ensure that the maximum cost of any transition from $N_t^x$ to $N_{t+1}^x$ using an input from $N_t^u$ is bounded.
Then, by choosing $N_t^x$ and $N_t^u$ to be regions of $\equiv_X$ and $\equiv_Y$, we obtain a chain strategy in $\hat{\T} = \abs(\T_\dss, \equiv_X,$ $\equivu)$, where the only region of $\hat{\T}$ reachable from the abstract state $N_t^x$ on input $N_t^u$ is $N_{t+1}^x$. 
Refer to Figure \ref{fig:partition} for an illustration of the chain strategy. 

Let $\ps(\{x_t^*\}_t) = \{P_{i_t}\}_t$.
We construct the sequence inductively, starting from $t = k+1$ and moving backwards.
Let $N_{k+1}^x$ be a grid cell of size $\epsilon_0/2^i$ that contains an open ball around $x_{t+1}^*$ which is contained in $P_{i_t}$. We can find such $N_{k+1}$ because of the robustness of the optimal control as defined in Definition \ref{def:robust}.
Assume we have computed $N_{t+1}^x, N_{t+1}^u, \ldots N^x_{k+1}$.
We show how to compute $N_t^x$ and $N_t^u$.
Note that as long as $N_t^{x}$ and $N_t^u$ are contained in $B_{M_x}(x_t^*)$ and $B_{M_u}(x_u^*)$, all the transitions from $N_t^x$ on $N_t^u$ will end in $N_{t+1}^x$. Hence, let $N_t^{x} \subseteq B_{M_x}(x_t^*)$ and $N_t^u \subseteq B_{M_u}(x_u^*)$. By induction, under this construction, all executions from $N_0^x$ will be in $N_t^x$ after $t$ steps. This chain of neighborhoods gives us a chain strategy.

Further, when $N_t^{x} \subseteq B_{\epsilon_x}(x_0^*)$ and $N_t^u \subseteq B_{\epsilon_u}(x_u^*)$ where $\epsilon_x \in [0,M_x]$ and $\epsilon_x \in [0,M_u]$, the cost of the strategy is within $\delta$ of the optimal cost where $c_3\epsilon_x + c_4 \epsilon_u \leq \delta$ for some constants $c_3$ and $c_4$ as given by Lemma \ref{lemma:robust}.
Thus, $|\W(\zeta) - \W(\zeta^*)| \leq \delta$ for any path $\zeta$ starting in an $\epsilon_x$ ball around $x^*_0$.
In addition, choose the $N_j^x$ and $N_j^u$ such that they correspond to an element of an $\epsilon_0/2^i$ grid for some $i$ (not necessarily the same $i$ for all neighborhoods).
Finally, define $\equiv_X$ and $\equivu$ such that the $N_j^x$ and $N_j^u$ are all equivalence classes of $\X$ and $\U$, respectively. Note that we need to ensure that for any $i, j$, $N_j^x$ is the same as $N_i^x$ or the two are disjoint, and, a similar condition for $N_j^u$ holds. This condition can be easily ensured during the construction by picking small enough $\epsilon$.
\end{IEEEproof}
Lemma \ref{lemma:chain} guarantees a chain strategy.
However, the partitions corresponding to the neighborhoods of $N^x $ and $N^u$ may not correspond to an uniform grid for any $\epsilon$.
Enumeration in Algorithm \ref{alg:refinement} only contains uniform grids with grid size $\epsilon_0/2^i$. Thus, 
the next lemma constructs a uniform grid by refining the chain strategy obtained from Lemma \ref{lemma:chain}.

\begin{lemma} \label{lemma:grid}
For a given $\delta >0$, there exists an $\epsilon = \epsilon_0/2^i >0$, such that $|\W(\T_\epsilon,x^\epsilon_0,\Pi) - \W(\T_\dss,x_0^*,\Pi)|$ $\leq \delta$, where $x^\epsilon_0 = [x_0^*]_{\equiv_X^\epsilon}$. Furthermore, there exists a winning strategy $\sigma_\epsilon$ with cost of winning $\W(\T_\epsilon,x^\epsilon_0,\Pi)$.
\end{lemma}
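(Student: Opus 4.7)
The plan is to combine Lemma \ref{lemma:chain} with the refinement machinery of Proposition \ref{thm:refinement} and Theorem \ref{thm:preserve}. Lemma \ref{lemma:chain} produces a chain winning strategy $\sigma$ on some abstraction $\hat{\T} = \abs(\T_\dss, \equiv_X, \equivu)$ whose cost is within $\delta$ of the optimum, but on a possibly nonuniform partition. The idea is to pick a sufficiently fine uniform dyadic grid $\epsilon = \epsilon_0/2^i$ so that the uniform-grid abstraction $\T_\epsilon = \abs(\T_\dss, \equiv_X^\epsilon, \equivu^\epsilon)$ refines $\hat{\T}$, and then sandwich $\W(\T_\epsilon, x_0^\epsilon, \Pi)$ between $\W(\T_\dss, x_0^*, \Pi)$ (from below by abstraction) and the chain-strategy cost (from above by refinement).

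Concretely, I would first invoke Lemma \ref{lemma:chain} with the given $\delta$. From its proof, each neighborhood $N_t^x$ and $N_t^u$ is a single cell of some $\epsilon_0/2^{i_t}$ grid, with possibly different $i_t$ across $t$. Setting $i = \max_t i_t$ and $\epsilon = \epsilon_0/2^i$, every coarser dyadic cell decomposes exactly into cells of the $\epsilon$-grid, so $\equiv_X^\epsilon$ refines $\equiv_X$ and $\equivu^\epsilon$ refines $\equivu$ (after extending the chain partitions on their complements by the same $\epsilon$-grid). Proposition \ref{thm:refinement} then yields $\T_\dss \preceq \T_\epsilon \preceq \hat{\T}$, with the natural simulation relations relating $x_0^* \in \Xinit$ to both $x_0^\epsilon = [x_0^*]_{\equiv_X^\epsilon}$ and $\hat{x}_0 = [x_0^*]_{\equiv_X}$. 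Two applications of Theorem \ref{thm:preserve} give the chain
\begin{equation*}
\W(\T_\dss, x_0^*, \Pi) \leq \W(\T_\epsilon, x_0^\epsilon, \Pi) \leq \W(\hat{\T}, \hat{x}_0, \Pi),
\end{equation*}
and the rightmost quantity is bounded by $\W(\hat{\T}, \sigma, \hat{x}_0) \leq \W(\T_\dss, x_0^*, \Pi) + \delta$ since the cost of any winning strategy is an upper bound on the optimal winning cost, combined with Lemma \ref{lemma:chain}. This establishes the first claim. For the second claim, since $\T_\epsilon$ is finite and $\W(\T_\epsilon, x_0^\epsilon, \Pi) < \infty$, the infimum defining this cost is attained by some winning strategy $\sigma_\epsilon \in \str(\T_\epsilon)$.

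The main obstacle I anticipate is the bookkeeping in aligning the grids. Because the $\grid$ function of Section \ref{sec:preliminaries} uses half-open intervals anchored at integer multiples of the grid scale, a cell of an $\epsilon_0/2^{i_t}$ grid does decompose cleanly into cells of any finer $\epsilon_0/2^i$ grid with $i \geq i_t$, but this must be checked explicitly for the particular form of $\grid$. A secondary subtlety is how to extend $\equiv_X$ and $\equivu$ from the chain cells to full equivalence relations on $\X$ and $\U$ in a manner compatible with refinement by the uniform $\epsilon$-grid; partitioning the complement of the chain cells into $\epsilon$-grid cells is the simplest choice and preserves the refinement relation used above.
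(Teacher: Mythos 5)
Your proposal is correct and shares the paper's skeleton: both start from the chain strategy and the dyadic neighborhoods of Lemma \ref{lemma:chain}, set $i$ to the maximum of the exponents $i_t, j_t$ so that the uniform $\epsilon_0/2^i$ grid refines every $N_t^x$ and $N_t^u$, and then transfer the chain strategy's cost bound to $\T_\epsilon$. Where you genuinely diverge is the transfer step. The paper defines $\sigma_\epsilon$ by hand as ``following the neighborhoods $N_t^x$'' and argues informally that every conforming path of $\T_\epsilon$ stays inside those neighborhoods, so its cost is bounded by that of $\sigma$. You instead complete $\equiv_X$ and $\equivu$ on the complements of the chain cells by the $\epsilon$-grid itself, invoke Proposition \ref{thm:refinement} to obtain $\T_\dss \preceq \T_\epsilon \preceq \hat{\T}$, and apply Theorem \ref{thm:preserve} twice to sandwich $\W(\T_\epsilon, x_0^\epsilon, \Pi)$ between $\W(\T_\dss, x_0^*, \Pi)$ and $\W(\hat{\T}, \sigma, \hat{x}_0)$. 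This buys rigor: the containment argument the paper waves at is exactly an instance of the simulation machinery already proved, and your version makes the dependence on it explicit, at the cost of the bookkeeping you correctly flag (coarser dyadic $\grid$ cells do decompose into finer ones because both are anchored at integer multiples of the scale, and the freedom in how Lemma \ref{lemma:chain} extends its partitions off the chain cells lets you make $\equiv_X^\epsilon$ a genuine refinement). The one soft spot, which you share with the paper, is the ``furthermore'' clause: finiteness of $\T_\epsilon$ alone does not immediately give attainment of the infimum over winning strategies, since the strategy space is still infinite. The cleaner fix is to observe that your application of Theorem \ref{thm:preserve} already produces a concrete winning strategy $\sigma_\epsilon$ with $\W(\T_\epsilon, \sigma_\epsilon, x_0^\epsilon) \leq \W(\hat{\T}, \sigma, \hat{x}_0) \leq \W(\T_\dss, x_0^*, \Pi) + \delta$, which is all that the proof of Theorem \ref{thm:main_converge} actually needs.
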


\begin{IEEEproof}
From the proof of Lemma \ref{lemma:chain}, we obtain a sequence of neighborhoods $N_t^x$ and $N_t^u$ which correspond to a chain strategy, say $\sigma$ starting from $N_0^x$.
Further, as observed in the proof, we can assume that every $N_t^x$ corresponds to an element of $\grid(\X, \epsilon_0/2^{i_t})$ for some $i_t$, and similarly, $N_t^u$ corresponds to an element of $\grid(\U, \epsilon_0/2^{j_t})$ for some $j_t$.
Let $i$ be the maximum of the $i_t$s and $j_t$s.
Note that $\grid(\X, \epsilon_0/2^i)$ refines $N_t^x$ and similarly, $\grid(\U, \epsilon_0/2^i)$ refines $N_t^u$.
In Figure \ref{fig:partition}, the squares around $x_t^*$ with bold borders are $N_t^x$, and the dashed squares which are contained in them correspond to the refined partition.
One can define a strategy $\sigma_\epsilon$ (not necessarily a chain anymore) for $\T_\epsilon$ which correspond to following the neighborhoods $N_t^x$.
Hence, all the paths in $\T_\epsilon$ which conform to $\sigma_\epsilon$ will be contained in the neighborhoods $N_t^x$.
Therefore, the cost of $\sigma_\epsilon$ is bounded by that of $\sigma$ which is at most $\delta$ away from the optimal cost.
Therefore, the optimal cost of $\T_\epsilon$ is at most $\delta$ away from that of $\T_\dss$.
\end{IEEEproof}

\noindent\textbf{Proof of Theorem \ref{thm:main_converge}.}
First, observe that $J_{\epsilon_0/2^i} \leq J_{\epsilon_0/2^j}$ for all $i > j$.
Further, from Lemma \ref{lemma:grid}, for any $\delta > 0$, there exists $\epsilon = \epsilon_0/2^i$,  such that $|\W(\T_\epsilon,x^\epsilon_0,\Pi) - \W(\T_\dss,x_0^*,\Pi)|$ $\leq \delta$.
Note $J_\epsilon = \W(\T_\epsilon,x^\epsilon_0,\Pi)$ and $ J_{\textit{opt}} = \W(\T_\dss,x_0^*,\Pi)$ is the optimal cost.
Hence, $|J_\epsilon - J_{\textit{opt}}| \leq \delta$.
Therefore, $J_{\epsilon_0/2^i}$ converges to $J_{\textit{opt}}$ as $i$ goes to infinity.
In addition, from Lemma \ref{lemma:grid}, for each sub-optimal cost $J_{\epsilon_0/2^i}$, there exists a suboptimal winning strategy $\sigma_{\epsilon_0/2^i}$ with cost of winning $J_{\epsilon_0/2^i}$.

At this point, we have shown that the strategy given by $\optcar$ incurs a suboptimal cost that converges to the optimal cost of $\dss$. 
The strategies used in the proof of Theorem \ref{thm:main_converge} have the property that the length of the maximal paths which conform to the strategy are finite and have a bound (in fact, they are all of the same length).
Further, the trace of all the paths is the same. However, during implementation, Algorithm \ref{alg:refinement} may return a sequence of suboptimal strategies $\sigma_{\epsilon_0/2^i}$ that results in paths with different lengths. Nonetheless, the cost of each path results from $\sigma_{\epsilon_0/2^i}$ is bounded by the cost $J_{\epsilon_0/2^i}$.

In addition, the strategy that is considered in the proof of Theorem \ref{thm:main_converge} gives a sequence of inputs which satisfy the property $\Pi$ from any point in an open neighborhood around the given initial state $x_0^*$.
Further, there is an open neighborhood around each of the control inputs such that the resulting paths satisfy $\Pi$.
Hence, Algorithm \ref{alg:refinement} in fact returns a controller that is robust against input uncertainties under the assumption that the original system has such optimal control. 
\section{Optimal Control of Finite Transition Systems} \label{sec:fts}


\begin{algorithm}[b!]
	\caption{$\solve$ (Two-Player Games)}
	\label{alg:2player}
	\begin{algorithmic}
		\Require Finite state transition system $\T_S$, Property $\Pi$ specified as $(\T_P, P_f)$
                \State $\T, S_f := \redreach(\T_S, \T_P, P_f)$ 
                \State Set for every $s \in \S - S_f$, $\cost(s) := 0$ if $s \in S_f$ and $\infty$ otherwise
                \For {$i = 1,\ldots,|\S|$}
			\For {$s \in \S$}
\State \[\cost^i(s) := \min_{u \in \U} \max_{(s, u, s') \in \Delta}
(\W(s, u, s') + \cost^{i-1}(s'))\]
\State \[\sigma^i(s) := \argmin_{u \in \U} \max_{(s, u, s') \in
  \Delta}  (\W(s, u, s') + \cost^{i-1}(s'))\]
			\EndFor
		\EndFor	
		\If  {$\cost^{|S|}(s_0) < \infty$}	 
                        \State Output the strategy $\sigma^{|S|}$ and the cost $C^{|S|}(s_0)$
		\EndIf
	\end{algorithmic}
\end{algorithm}

This section presents a value iteration scheme for computing the optimal cost and optimal strategy for finite transition systems.
Observe that the strategies of the abstract system that are used in the proof of Theorem \ref{thm:main_converge} have a linear structure, that is, there are no paths in the abstract system of length greater than the number of states in the system that conform with the strategy.
We call such a strategy a layered strategy.
Hence, in this section we present an algorithm for computing an optimal strategy for a finite state transition system that is layered.
The algorithm is given in Algorithm \ref{alg:2player} which is a modified Bellman-Ford algorithm \cite{bellman1956routing}.

The function $\redreach$ reduces the problem of computing the layered strategy for a property $\Pi$ to that of reachability.
It consists of taking a product of the input transition system $\T_S$ and the transition system $\T_P$ of the property. More formally, given the input transition system $\T_S$ and the transition system $\T_P$ of the property, the product transition system returned by $\redreach$ is defined as follows.
\begin{definition}
Let $\T_S = (\S_S, \Sinit_S, \U_S, \P, \Delta_S, \L_S,
\W_S)$ be a state transition system, and $\T_P = (\S_P, \Sinit_P,
\U_P, \P,\Delta_P, $ $\L_P, \W_P)$ be the automation that represents the regular property. Then, the product transition system is
$\T = (\S, \Sinit,
\U, \P, \Delta, \L, \W)$ where
\begin{itemize}
\item $\S = \{(s_1, s_2) \in \S_S \times
\S_P \,|\, \L_S(s_1) = \L_P(s_2)\} \cup \{s_d\}$ where $s_d$ is a dead state; 
\item $\Sinit =  \Sinit_S \times \Sinit_P$;
\item $\U = \U_S$;
\item $\P$ is the same for both $\T_S$ and $\T_P$. The final states of $\T_P$ is denoted by a proposition $P_f \in \P$;
\item $\Delta = \Delta_1
\cup \Delta_2$, where $\Delta_1 = \{ ((s_1, s_2), u, (s'_1, s'_2)) \in
\S \times \U \times (\S \backslash \{s_d\}) \,|\, (s_1, u, s'_1) \in \Delta_S, (s_2, a,
s'_2) \in \Delta_P$ for some $a\}$ and $\Delta_2=\{((s_1, s_2), u, s_d) \in \S \times U \times \{s_d\}\}$ such that there
exists $(s_1, u, s'_1) \in \Delta_S$ for some $s'_1$ and there does
not exist $a$ and $s'_2$ such that $(s_2, a, s'_2) \in \Delta_P$ and
$\L_S(s'_1) = \L_P(s'_2)$;
\item $\L(s) = \L_S(s)$ for $s \in \S_S$;
\item $\W((s_1,s_2),u,(s'_1,s'_2)) = \W_S(s_1,u,s_2)$.
\end{itemize}
\end{definition}
Furthermore, the set of final states $S_f$ of $\T$ with respect with reachability is solved as $S_f = \{(s_1, s_2) \in (\S \backslash \{s_d\}) \times
(\S \backslash \{s_d\}) \,|\, \L_P(s_2) = P_f\}$.

The algorithm initially assigns a cost of $0$ to the states in $S_f$ and $\infty$ otherwise.
The cost $\cost^i$ in the $i$-iteration captures the optimal cost of reaching $S_f$ by a strategy in which all paths that conform to it have length at most $i$, and $\sigma^i$ stores a corresponding strategy.
Hence, $\cost^{|\S|}$ provides a layered strategy if $\cost^{|\S|}(s_0)
< \infty$.
The algorithm can be improved wherein it terminates earlier than
completing the $|\S|$ iterations, if the costs $\C$ do not change
between iterations. In the worst case, this algorithm runs in $O(|\Delta||\S|)$ time where $|\Delta|$ is the number of transitions in $\T$ and $|\S|$ is the number of states in $\T$.

\section{Implementation} \label{sec:example}

Algorithm \ref{alg:refinement} and \ref{alg:2player} are implemented
in the tool $\optcar$ in Python 2.7.
A Python package, NetworkX, is used to represent the graph structures
that arise in solving Algorithm \ref{alg:2player}, and the Parma Polyhedra
Library \cite{BagnaraHZ08SCP} is used to represent the polyhedral sets
that arise in the gridding and to solve the linear program problem
that arises in the weight computation.
$\optcar$ is tested on 
a linear dynamical system and a piecewise linear
system on a MacBook Pro 8.2, 4 core Intel Core i7 processor
with speed 2200 Hz, and 8GB RAM.

\subsection{Linear Dynamical System}

The following linear dynamical system example is obtained from \cite{tazaki_abstraction_2012}:
\begin{gather}
	x_{t+1} = Ax_t+Bu_t \label{eqn:linear_system}\\
	A = \left[\begin{array}{cc}
	0.68 & -0.14 \\
	0.14 & 0.68
	\end{array}\right] \quad
	B = \left[\begin{array}{c}
	0 \\
	0.1
	\end{array}\right] \nonumber
\end{gather}
where $x_t = (x_t^1,x_t^2) \in [-1,1]^2$, and $u_t\in [-1,1]$.

\begin{figure}[b!]
      \centering
      \includegraphics[trim = 3.2cm 6cm 4cm 6.5cm, clip, width=0.45\textwidth]{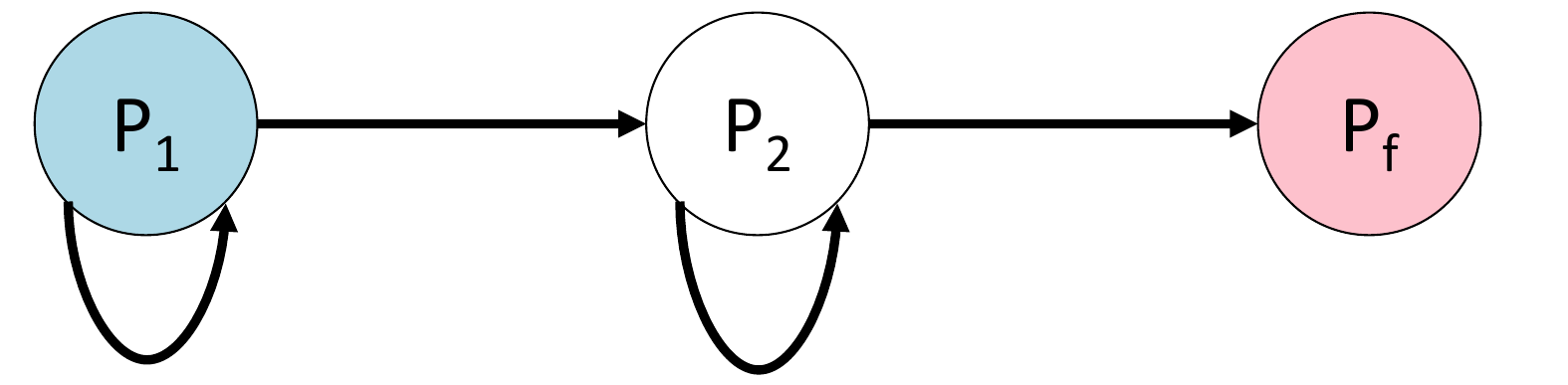}
	  \caption{Automaton that represents the propositions of the two examples where $P_1$ is the pink region, $P_2$ is the white region, and $P_f$ is the light blue region.}\label{fig:proposition_example_1}
\end{figure}

The cost function is $\J(\phi(x_0,u)) =\sum_{t\in[k]} \norm{u_t}^2_2$. This cost is approximated as $\sum_{t\in[k]} \norm{u_t}_1$ during implementation of $\optcar$. The goal is to drive the system from an initial point $x_0 = (0.9,0.9)$ to a final zone defined by a box at the origin, $P_f = \{x \mid\norm{x}_{\infty} \leq \frac{1}{5}\}$. The propositions of this example are represented in Figure \ref{fig:proposition_example_1}. The algorithm is implemented on two uniform grids on the states - $20 \times 20$ and $40 \times 40$. The input, $u$, is partitioned into 5 uniform intervals.

\begin{table*}[t]
\renewcommand{\arraystretch}{1.3}
\centering
\begin{threeparttable}
	\caption{Performance of $\optcar$ and LQR for System \eqref{eqn:linear_system}. \tnote{1} }
	\label{tab:result_linear}
	\begin{tabular}{l|cc|c}\hline
	Grid & $20\times 20$ & $40\times 40$ & LQR\\
	\hline
	Computation time (seconds) & 355.82 & 5212.91 & 0.04 \\
	Optimal cost & 0.5 & 0 & 0 \\
	Optimal step & 6 & 6 & 6 \\
	Final point & (-0.0468,0.1499) &(-0.0468,0.1999)&(-0.0468,0.1999) \\
	\hline
	\end{tabular}
    \begin{tablenotes}
\item [1] $20 \times 20$ and $40 \times 40$ are optimal controller synthesis using $\optcar$ for two different uniform grids. Computation time is the time a method takes to compute the optimal strategy. Optimal step is the total number of steps that the optimal path takes to reach the goal region. Final point is where the optimal path ends in the goal region.
\end{tablenotes}
    \end{threeparttable}
\end{table*}

Strategies obtained from $\optcar$ are compared with the strategy given by linear quadratic regulator (LQR) in Table \ref{tab:result_linear}. For a linear system, LQR is always a superior technique in comparison to $\optcar$ because the computation is significantly more efficient. The goal of this example is to illustrate that in an example with known optimal controller, the strategies given by $\optcar$ approximates the optimal control of LQR very closely, and it improves with refinement. Figure \ref{fig:path_linear-traj} shows the state trajectory for the three cases.

\begin{figure}[b!]
      \centering
{\includegraphics[trim = 15mm 0mm 25mm 15mm, clip, width=0.6\linewidth]{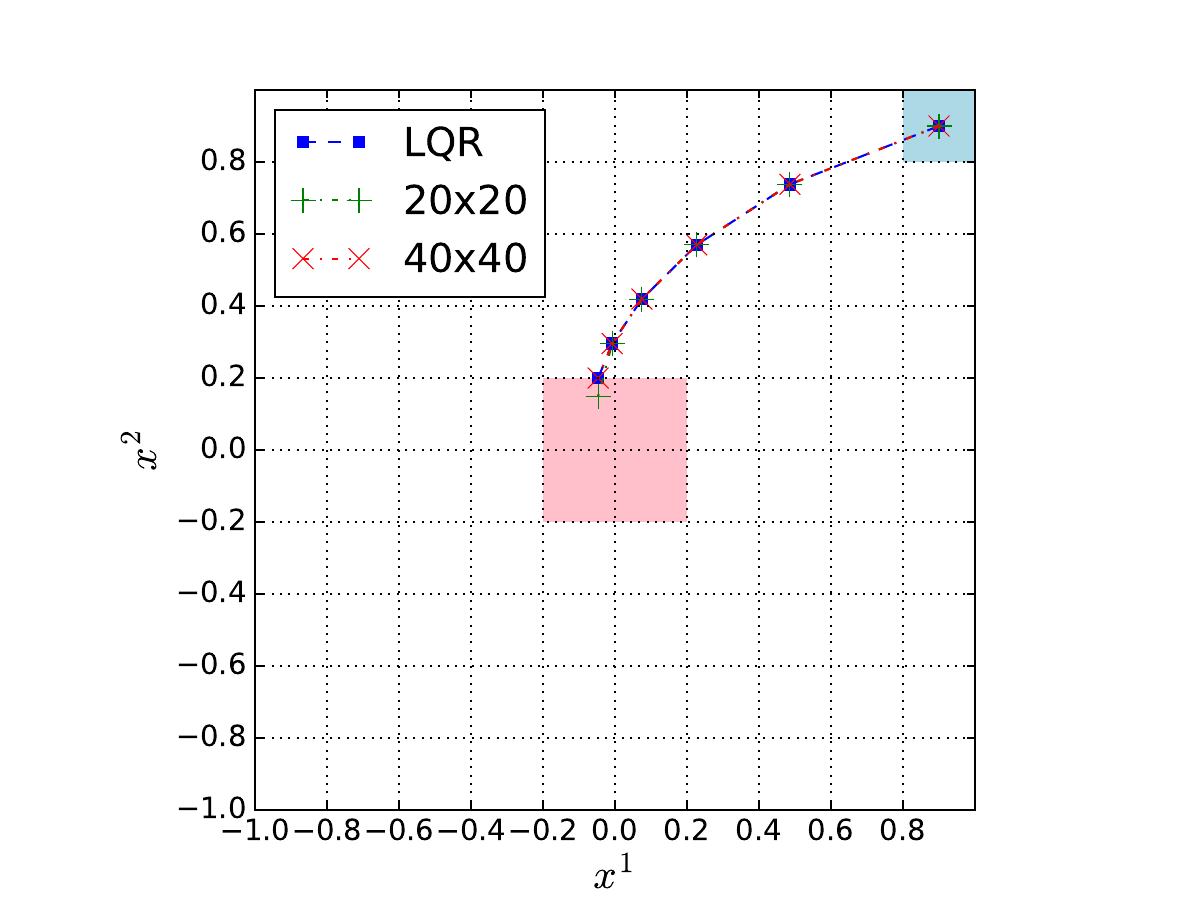}
	  \caption{Simulated result of $\optcar$ and LQR on the linear dynamical system. }\label{fig:path_linear-traj}}
\end{figure}

\begin{figure}[b]
      \centering
	  {\includegraphics[width=0.6\linewidth]{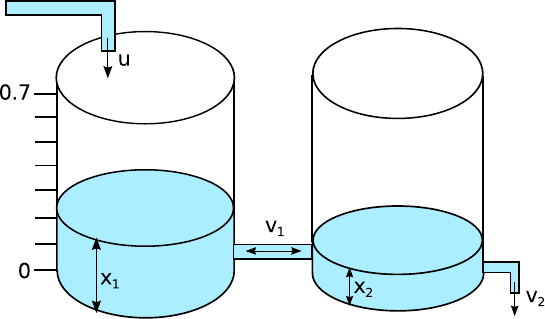}}
      \caption{A schematic of a two-tank system. }
      \label{fig:watertank}
\end{figure}

\subsection{Two-tank System}

\begin{figure*}[t]
      \centering
	  \subfloat[Uniform $28\times17$]{\includegraphics[trim = 30mm 10mm 25mm 10mm, clip=true, width=0.25\linewidth]{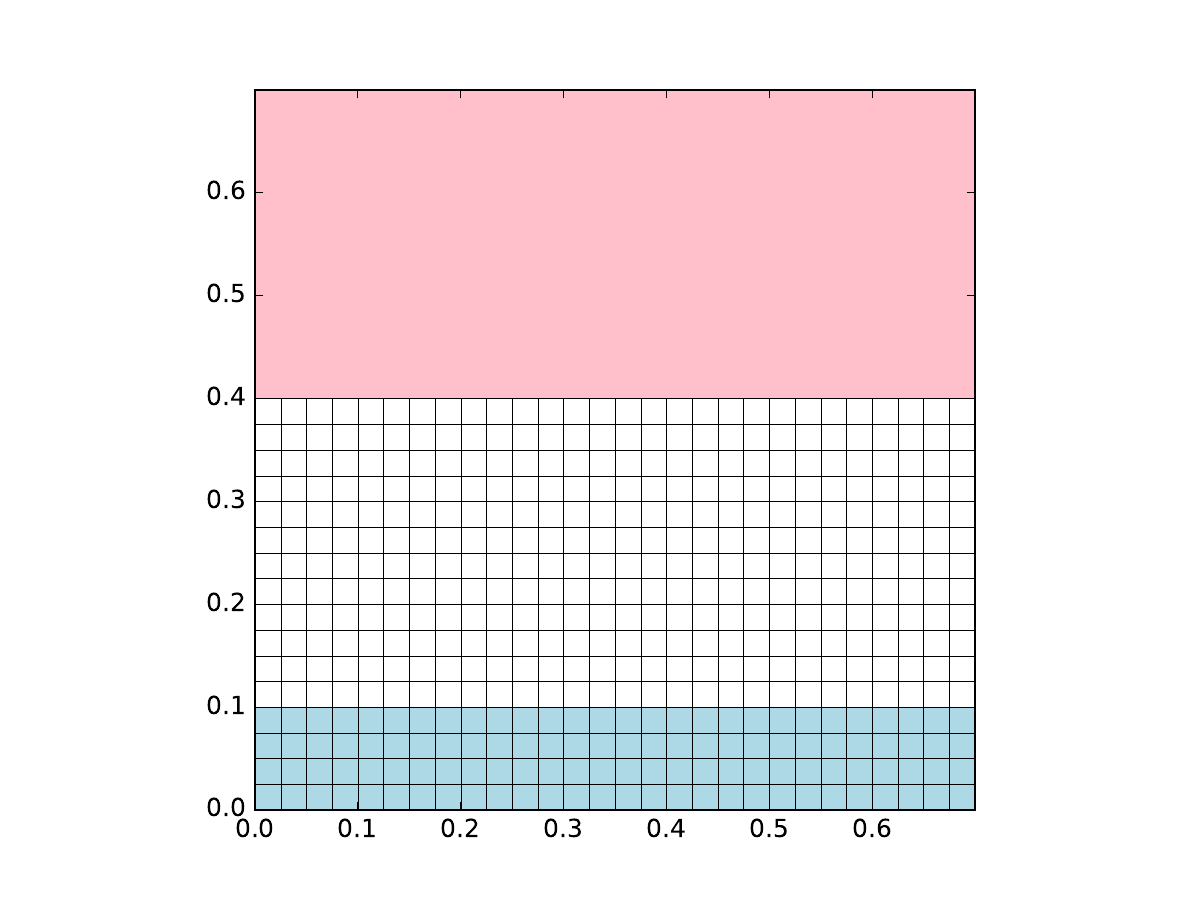}}
       \subfloat[Uniform $56\times33$]{\includegraphics[trim = 30mm 10mm 25mm 10mm, clip=true, width=0.25\linewidth]{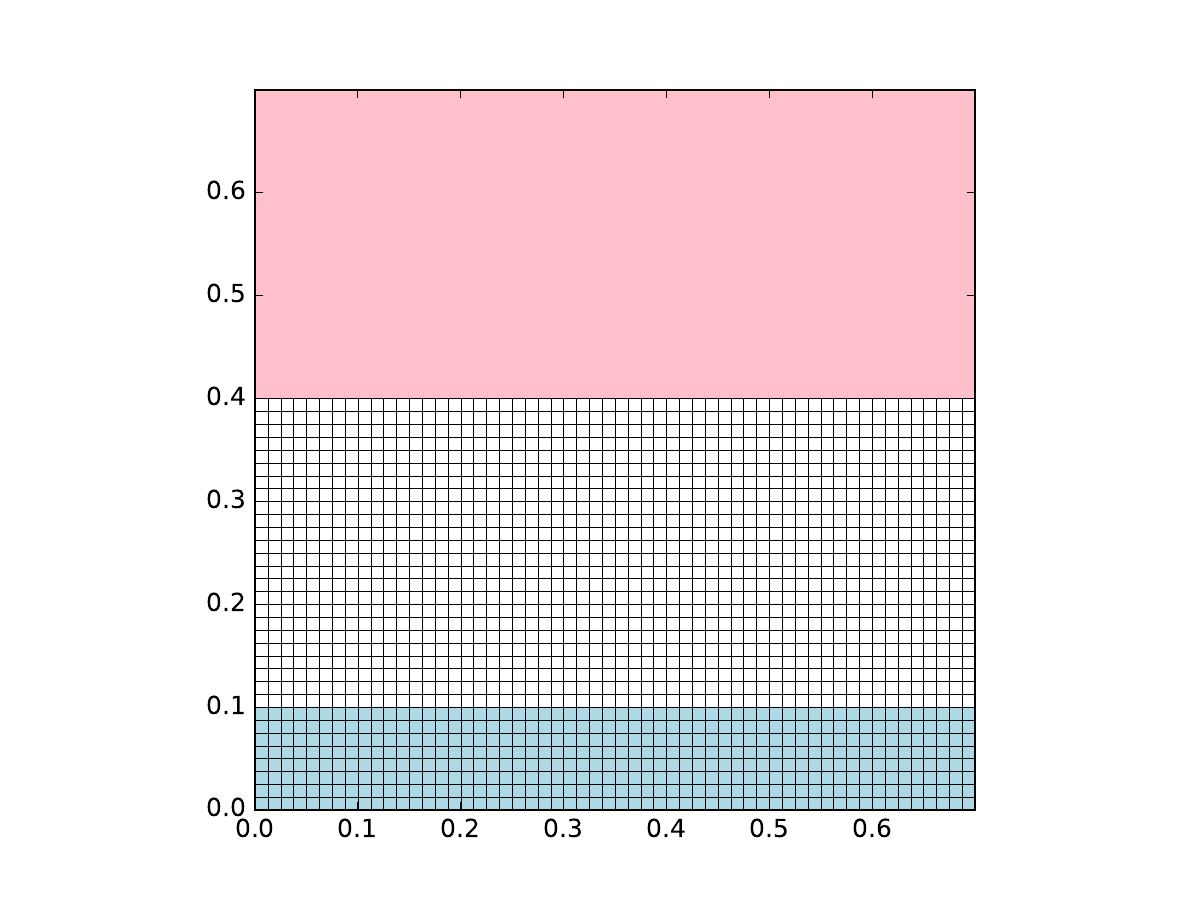}}
       \subfloat[Non-uniform $23\times17$]{\includegraphics[trim = 30mm 10mm 25mm 10mm, clip=true, width=0.25\linewidth]{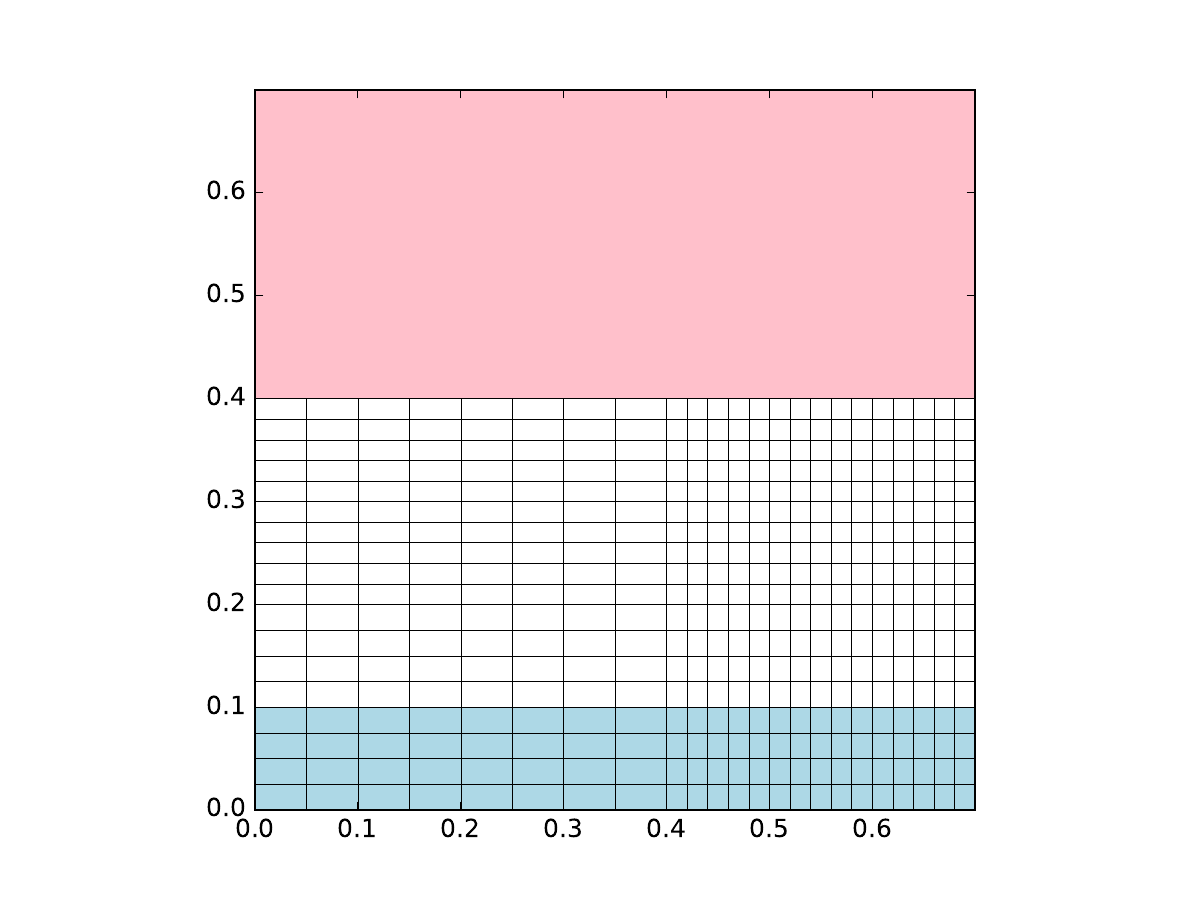}}
       \subfloat[Non-uniform $38\times25$]{\includegraphics[trim = 30mm 10mm 25mm 10mm, clip=true, width=0.25\linewidth]{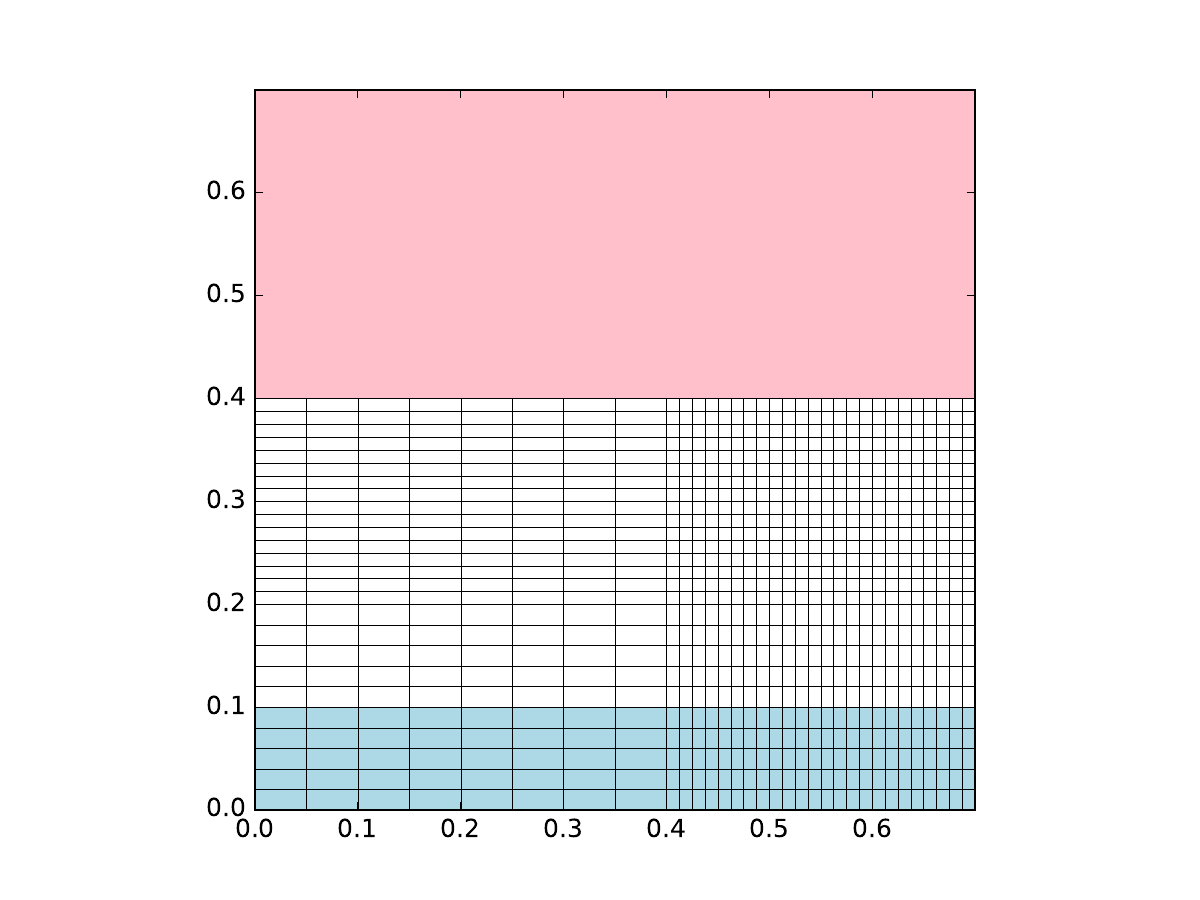}}
      \caption{Partitions for the two-tank system. The goal region does not need to be partitioned because the transitions within the goal region are irrelevant.}
      \label{fig:watertank_grid}
\end{figure*} 

A two-tank system from \cite{yordanov_temporal_2012} is used as an
example of a piecewise linear system (Figure \ref{fig:watertank}). The water can flow in between the two tanks through a pipe that connects them. The pipe is located at level 0.2. Tank 1 (left) has an inflow of water that is managed by a controller, and tank 2 (right) has an outflow of water that is fixed. The goal of the controller is to fill up tank 2 to level 0.4 from an initially low water level 0.1 using as small amount of water as possible from the source above tank 1. The goal will be made precise after the system is described formally next. 

The two-tank system has a linearized dynamics given by 
\begin{gather}
	x_{t+1} = A x_t+Bu_t \label{eqn:two_tank}\\
	A = \left\{\begin{array}{cr}
	A_1 &  x \in [0,0.2]^2\\
	A_2 & \mbox{ otherwise} 
	\end{array}\right. \quad
	B = \left[\begin{array}{c}
	342.6753 \\
	0 
	\end{array}\right],\nonumber
\end{gather}
where 
\begin{gather*}	
	A_1 = \left[\begin{array}{cc}
	1 & 0 \\
	0 & 0.9635 
	\end{array}\right] \quad
	A_2 = \left[\begin{array}{cc}
	0.8281 & 0.1719 \\
	0.1719 & 0.7196 
	\end{array}\right],
\end{gather*}
$x_t = (x^1_t,x^2_t) \in [0,0.7]^2$, and $u_t\in [0,0.0005]$. The water level in tank 1 at time $t$ is $x^1_t$, and the water level in tank 2 at time $t$ is $x^2_t$. The cost function is chosen to be $\J(\phi(x_0,u)) = \sum_{t\in[k]} \norm{u_t}_1$ to represent minimal water inflow, and the goal is to drive the system from partition, $[0,0.7] \times [0,0.1]$, to partition $[0,0.7] \times [0.4,0.7]$. The propositions of this example are represented in Figure \ref{fig:proposition_example_1}.

\begin{figure}[b!]
      \centering
      \subfloat[State trajectory]{\includegraphics[trim = 25mm 0mm 25mm 12mm, clip, width=0.5\linewidth]{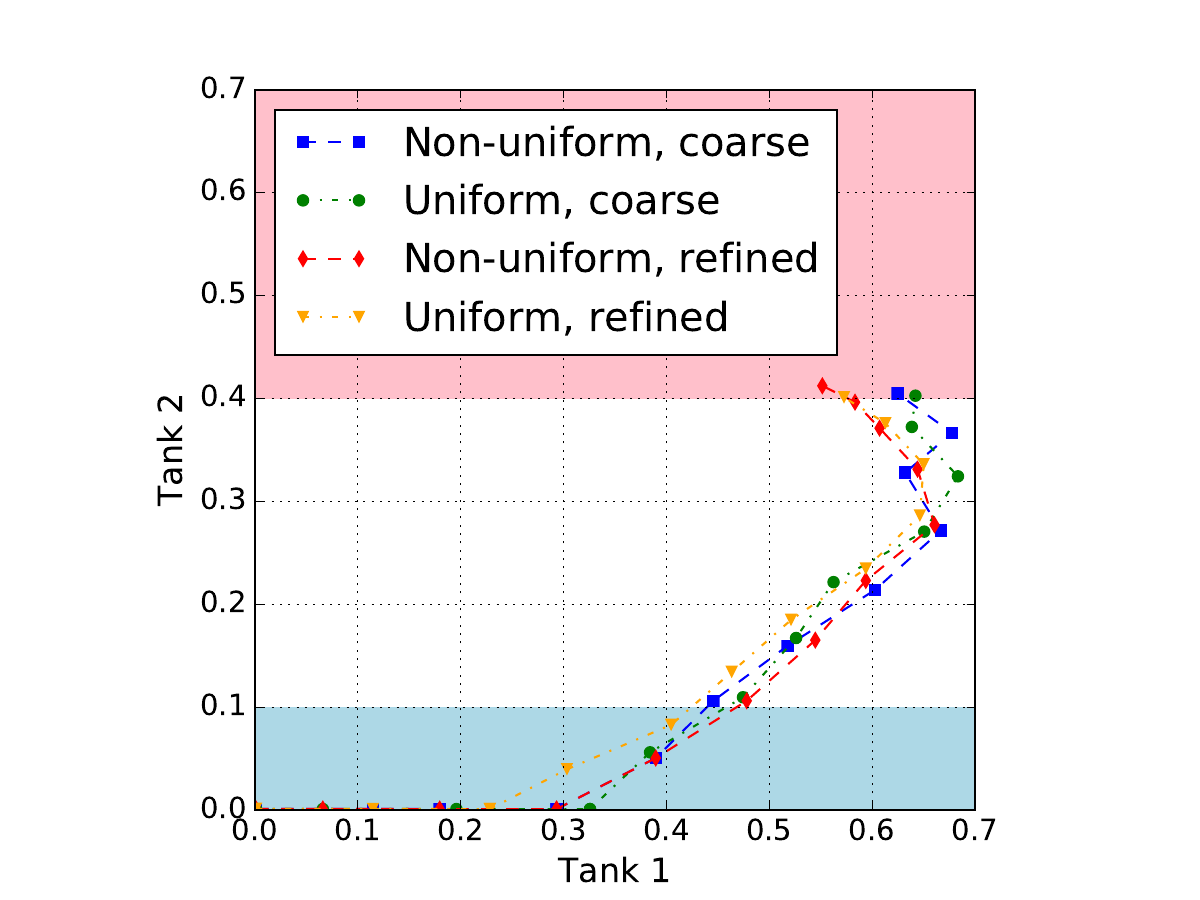}}
      \subfloat{\includegraphics[trim = 3mm 0mm 20mm 10mm, clip, width=0.5\linewidth]{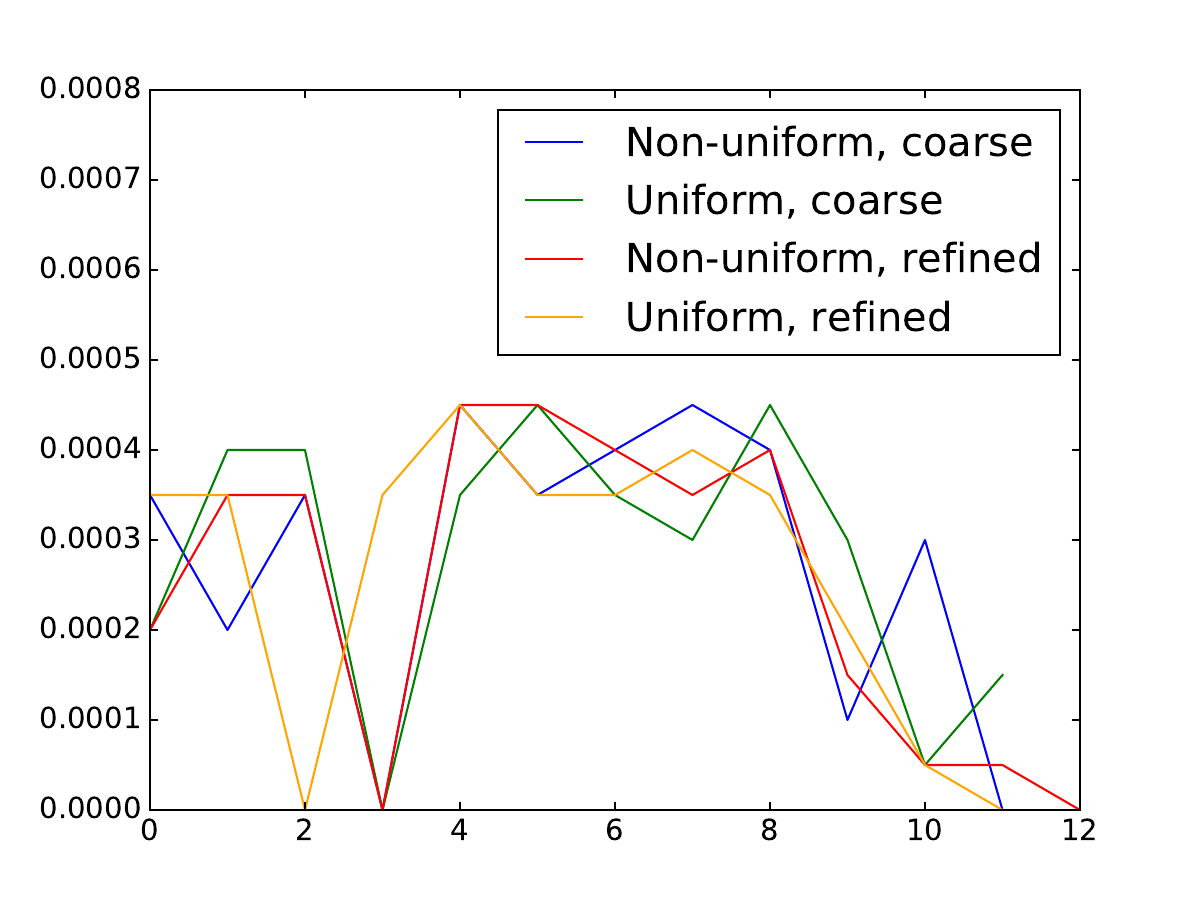}}
      \caption[Control input]{State trajectory and control input generated by the controller from $\optcar$ for the two-tank system. }\label{fig:path_watertank-traj}
\end{figure}


The algorithm is implemented on two uniform grids on the states - $28\times 17$ (coarse) and $56\times 33$ (refined), and two non-uniform grids - $23\times 17$ (coarse) and $32\times 25$ (refined). Figure \ref{fig:watertank_grid} illustrates the grids. The input, $u$, is partitioned into 10 uniform intervals. The goal region is represented as one partition for all cases. This choice of goal representation speeds up computation time, and does not change the results in Section \ref{sec:correctness}. Once a path arrives at the goal region, the path ends. Thus, the goal region does not need to be partitioned because the transitions within the goal region is irrelevant. In addition, the partitions' sizes for a non-uniform grid do not necessary have to be the same. Partitions whereby the transitions are more likely to be far can be larger because the states most likely will not end up at the neighboring partitions if the partitions are small. Another example of non-uniform grids with the same principle would be to have finer grids near the goal and coarser grids away from the goal. Such modification is feasible if the control engineer has prior information about the system from his/her past experiences. These modifications reduce computation time, and also allow for finer grids at regions that matter to achieve a better result. 

Strategies obtained from $\optcar$ are compared in Table \ref{tab:result_watertank}. Figure \ref{fig:path_watertank-traj} shows the state trajectory and control input for the four cases, all start from $(0.001, 0.001)$. This example shows that choosing a suitable partition can reduce the computation time dramatically while still achieving comparable performance to the performance of a naive uniform grid. Hence, future extension of this technique includes designing an intelligent scheme to partition the domain such that computation time is reduced. Lastly, about $60\% - 70\%$ of the computation time are used to construct the abstraction (i.e. $\consabs$ step in Algorithm \ref{alg:refinement}) in which the computations can be parallelized easily to decrease computation time.

\begin{table*}[t]
\renewcommand{\arraystretch}{1.3}
	\centering	
    \begin{threeparttable}
	\caption{Performance of $\optcar$ using Different Grids for System \eqref{eqn:two_tank}. \tnote{2}}
	\label{tab:result_watertank}
	\begin{tabular}{l|cc|cc}\hline
	Grid & $28\times 17$ & $56\times 33$ & $23\times 17$ & $38\times 25$  \\
	\hline
	Computation time (seconds) & 1234.53 & 22119.36 & 1057.43 & 4309.04\\
	Optimal cost & 0.00340 & 0.00320 & 0.00335 & 0.00320 \\
	Optimal step & 12 & 12 & 12 & 13 \\
	Final point & (0.642,0.402) &(0.573,0.401) & (0.625,0.405) & (0.552,0.412) \\
	\hline
	\end{tabular}
    \begin{tablenotes}
\item [2] The first two columns are results for uniform grids. The last two columns are results for non-uniform grids. The grids are shown in Figure \ref{fig:watertank_grid}. Computation time is the time $\optcar$ takes to compute the optimal strategy. Optimal step is the total number of steps that the optimal path takes to reach the goal region. Final point is where the optimal path ends in the goal region.
\end{tablenotes}
    \end{threeparttable}
\end{table*}

\section{Conclusion} \label{sec:conclusion}
In this paper, we consider the problem of synthesizing optimal control
strategies for discrete-time piecewise linear system with respect to
regular properties.
We present an abstraction-refinement approach for constructing
arbitrarily precise approximations of the optimal cost and the
corresponding strategies. This approach computes a sequence of suboptimal controller that converges to the optimal controller with refinement. The resulting suboptimal controller would generate trajectories that incurs cost no greater than the optimal cost of the corresponding abstract system.
The abstraction based approach can be applied to the general class of
hybrid systems and for properties over infinite traces, however, the
challenge is in computing edges and weights, especially, for
non-linear dynamics and in continuous time. 

Future work will include extending the technique to more complex dynamics and continuous-time hybrid systems. In addition, the cost preserving abstraction technique will be extended from regular properties to $\omega$-regular properties. To reduce computation time, a more intelligent gridding scheme in the refinement step will be developed. Lastly, the neighborhoods of states and inputs in the abstract system naturally model measurement errors and input uncertainties of the concrete system. Hence, a potential future application of this technique is in synthesizing robust optimal control for a hybrid system.


\section*{Acknowledgment}
Pavithra Prabhakar was partially supported by EU FP7 Marie Curie Career Integration Grant No. 631622 and NSF CAREER 1552668. This work was partially conducted when Yoke Peng Leong was an intern at IMDEA Software Institute.

\appendices

\section{Proof for Lemma 7}

Rewrite $x_{t+1} = A_t x_t+B_t u_t$ as 
\begin{align*}
	x_{t+1} = \left(\prod^t_{j=0} A_j \right) x_0+B_t u_t + \sum_{k=1}^{t}\left( \left(\prod^t_{j=k} A_j \right) B_{k-1}u_{k-1}\right)
\end{align*}
where $\prod^t_{j=0} A_j  = A_t A_{t-1} \ldots A_{1} A_{0} $.
Then,
\begin{align*}
	&\norm{x_{t+1} - x^*_{t+1}}_{\infty}\\
	&\quad= \left|\!\left| \left(\prod^t_{j=k} A_j \right)(x_0-x^*_0)+B_t (u_t-u^*_t) +\sum_{k=0}^{t-1} \left( \left(\prod^t_{j=k} A_j \right) B_{k-1} (u_{k-1}-u^*_{k-1})\right)\right|\!\right|_{\infty}\\
	&\quad\leq \prod^t_{j=0} \norm{A_j }_{\infty} \norm{x_0-x^*_0}_{\infty} + \norm{B_t}_\infty \norm{u_t-u^*_t}_\infty  + \sum_{k=1}^{t}\left(\prod^t_{j=k} \norm{A_j}_\infty \right)\norm{B_{k-1}}_{\infty} \norm{u_{k-1}-u^*_{k-1}}_{\infty}\\
	&\quad\leq \prod^t_{j=0} \norm{A_j }_{\infty}  \epsilon_{x} + \norm{B_t}_\infty  \epsilon_{u}  + \left( \sum_{k=1}^{t}\left(\prod^t_{j=k} \norm{A_j}_\infty \right) \norm{B_{k-1}}_{\infty}\right)  \epsilon_{u} \\
	&\quad=c_1 \epsilon_{x} +  c_2 \epsilon_{u}
\end{align*}
where  $c_1 = \prod^t_{j=0} \norm{A_j }_{\infty} $ and $c_2 = \norm{B_t}_\infty+\sum_{k=1}^{t}\left(\prod^t_{j=k}\right. $ $\left.\norm{A_j}_\infty \right) \norm{B_{k-1}}_{\infty}$.

\section{Proof for Lemma 8}

First, compute
\begin{align*}
	 |\W(\zeta)- \W(\zeta^*)| &=  \left|\sum^k_{t=0} \J(x_{t+1}, u_t) - \J(x_{t+1}^*, u_t^*)\right| \\
	 & \leq \sum^k_{t=0}| \J(x_{t+1}, u_t) - \J(x_{t+1}^*, u_t^*)|. 
\end{align*}
Since $\J(x,u)$ is a continuous function, there exists a constant $C_t > 0$ such that 
\begin{equation*}
	| \J(x_{t+1}, u_t) - \J(x_{t+1}^*, u_t^*)| \le C_t \norm{ \bar{x}_t - \bar{x}_t^*}_\infty. 
\end{equation*}
Hence,
\begin{align*}
	 &|\W(\zeta)- \W(\zeta^*)| \leq \sum^k_{t=0} C \norm{ \bar{x}_t - \bar{x}_t^*}_\infty 
\end{align*}
where $C = \max_{t\in [k]} C_t$ and $\bar{x}_t = [x_{t+1},u_t] \in \reals^{n+p}$ is a joined vector of $x$ and $u$.
By Lemma \ref{lemma:xn}, 
\begin{align*}
	 |\W(\zeta)- \W(\zeta^*)| & \leq \sum^k_{t=0} C \max\{c_1(t+1) \epsilon_x + c_2(t+1) \epsilon_u, \epsilon_u\} \\
	 &\le \max\{c'_3 \epsilon_x + c'_4  \epsilon_u, k C \epsilon_u\}
\end{align*}
where $c'_3 = \max_{t\in [k]} kCc_1(t+1)$, and $c'_4 = \max_{t\in [k]} k C c_2(t+1)$.
Then, 
\begin{align*}
	 & |\W(\zeta)- \W(\zeta^*)| \le c_3 \epsilon_x + c_4 \epsilon_u \\
	 & c_3 = \left\{
	 \begin{array}{ll} c'_3, &  c'_3 \epsilon_x + c'_4  \epsilon_u \ge kC\epsilon_u \\
	 0, &  c'_3 \epsilon_x + c'_4  \epsilon_u < kC\epsilon_u
	 \end{array} \right. \\
	 & c_4 = \left\{\begin{array}{ll} c'_4, &  c'_3 \epsilon_x + c'_4  \epsilon_u \ge kC\epsilon_u \\
	 kC, &  c'_3 \epsilon_x + c'_4  \epsilon_u < kC\epsilon_u
	 \end{array}\right.
\end{align*}

	


\bibliographystyle{IEEEtran}
\bibliography{OptCAR}

%

\begin{IEEEbiography}[{\includegraphics[width=1in,height=1.25in,clip,keepaspectratio]{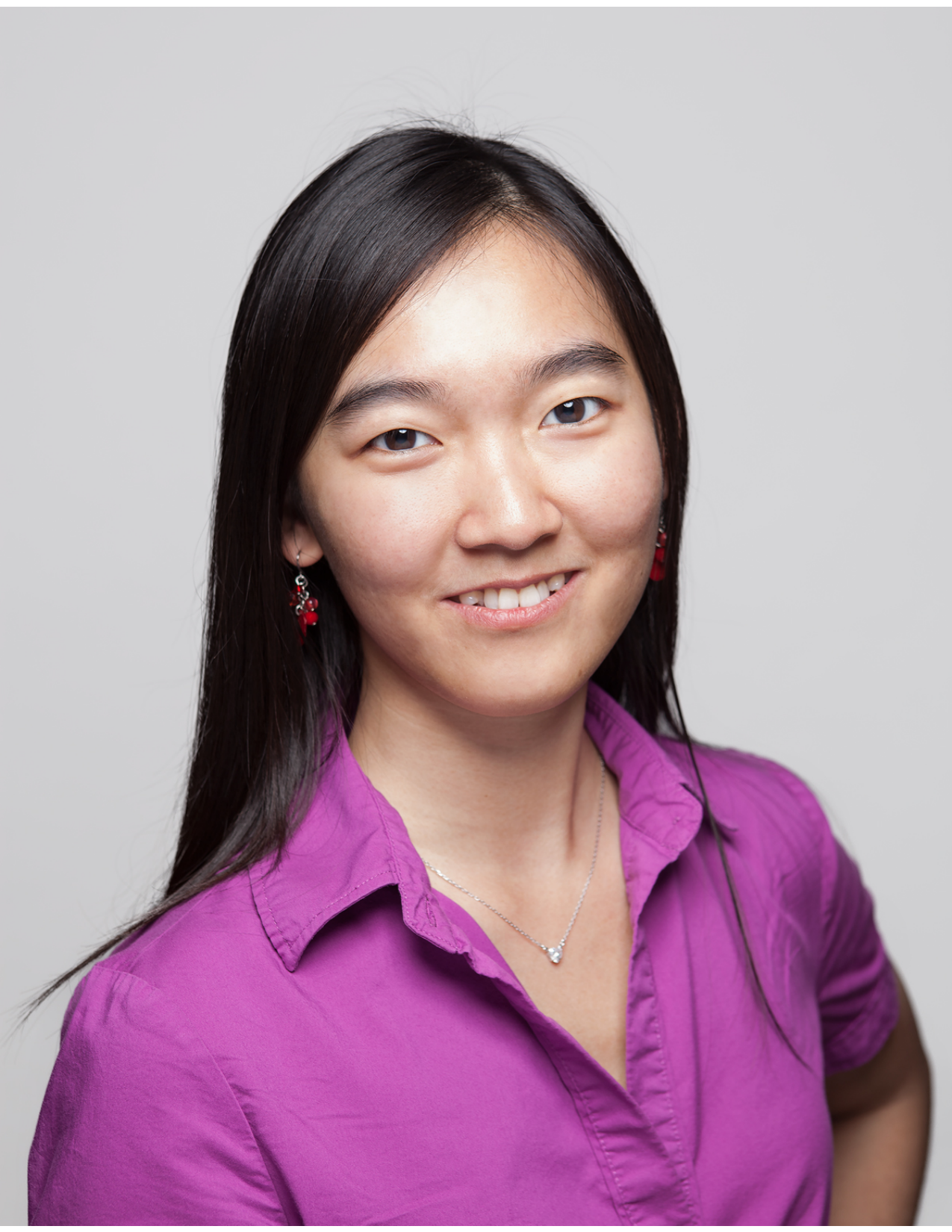}}]{Yoke Peng Leong}
received the B.S. and M.S. degrees
in Mechanical Engineering from Northwestern University, Evanston, IL, USA, in 2012. She
is currently a Ph.D. candidate in Control and
Dynamical Systems at the California Institute of Technology, Pasadena, CA, USA.
\end{IEEEbiography}

\begin{IEEEbiography}[{\includegraphics[width=1in,height=1.25in,trim=0cm 20px 0cm 20px,clip,keepaspectratio]{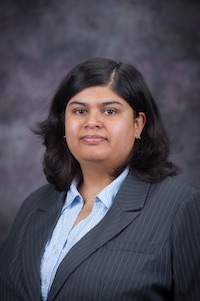}}]{Pavithra Prabhakar}
is an associate professor of computer science at Kansas State University, where she holds the Peggy and Gary Edwards Chair in Engineering. She obtained her doctorate in Computer Science from the University of Illinois at Urbana-Champaign (UIUC) in 2011, from where she also obtained a masters in Applied Mathematics. She was a CMI (Center for Mathematics of Information) fellow at Caltech for the year 2011-12. She has been on the faculty of Kansas State University since 2015, and has previously held a faculty position at the IMDEA Software Institute. Her main research interest is in the Formal Analysis of Cyber-Physical Systems, with emphasis on both theoretical and practical methods for verification and synthesis of hybrid control systems. Her papers have been selected for a best paper honorable mention award from Hybrid Systems: Computation and Control, best papers of MEMOCODE and invited papers at Allerton and American Control Conference. She has been awarded a Sohaib and Sara Abbasi fellowship from UIUC, an M.N.S Swamy medal from the Indian Institute of Science for the best masters thesis, a Marie Curie Career Integration Grant from the European Union, Michelle Munson-Serban Simu Keystone Research Faculty Scholarship from the KSU College of Engineering, a summer faculty fellowship from AFRL, an NSF CAREER Award and an ONR Young Investigator Award.
\end{IEEEbiography}





\end{document}